\documentclass[12pt,   a4paper]{article}
\usepackage{amsfonts}
\usepackage{mathrsfs}
\usepackage{latexsym}
\usepackage{xy}
\usepackage{tikz-cd}
\usepackage{graphics}
\usepackage{amsfonts,  amsmath,  amssymb,  amsthm}
\usepackage{color}

\DeclareMathOperator{\FC}{FC}
\DeclareMathOperator{\GFC}{GFC}
\DeclareMathOperator{\Flat}{Flat}
\DeclareMathOperator{\Ker}{Ker}
\DeclareMathOperator{\Ext}{Ext}
\DeclareMathOperator{\Hom}{Hom}

\xyoption{all}

\newcommand{\bcen}{\begin{center}}     \newcommand{\ecen}{\end{center}}
\newcommand{\bay}{\begin{array}}      \newcommand{\eay}{\end{array}}
\newcommand{\beq}{\begin{eqnarray*}}      \newcommand{\eeq}{\end{eqnarray*}}

\def\Hom{\mathrm{Hom}}
\def\Ker{\mathrm{Ker}}
\def\Im{\mathrm{Im}}
\def\Ext{\mathrm{Ext}}
\def\Tor{\mathrm{Tor}}

\def\Mod{\mathrm{Mod}}
\def\FC{\mathrm{FC}}
\def\GFC{\mathrm{GFC}}

\def\Id{\mathrm{Id}}
\def\tr{\mathrm{tr}}
\def\pd{\mathrm{pd}}
\def\fd{\mathrm{fd}}

\def\Proj{\mathrm{Proj}}
\def\Flat{\mathrm{Flat}}

\def\Coker{\mathrm{Coker}}

\def\Ind{{\rm Ind}}

\def\arrow{{\rm arrow}}
\begin{document}

\newtheorem{theorem}{Theorem}[section]
\newtheorem{proposition}[theorem]{Proposition}
\newtheorem{lemma}[theorem]{Lemma}
\newtheorem{corollary}[theorem]{Corollary}
\newtheorem{remark}[theorem]{Remark}
\newtheorem{example}[theorem]{Example}
\newtheorem{definition}[theorem]{Definition}
\newtheorem{question}[theorem]{Question}
\numberwithin{equation}{section}

\title{\large\bf
Gorenstein flat-cotorsion modules over tensor rings}

\author{\large Yongyun Qin, Chaobin Yin}

\date{\footnotesize School of Mathematics,  \  Yunnan Key Laboratory of Modern Analytical Mathematics and Applications,  
Yunnan Normal University,   Kunming,   Yunnan 650500,   China.
\\E-mail: qinyongyun2006@126.com
	}

\maketitle

\begin{abstract}
Let  $T_R(M)$  be a tensor ring, where $R$ is a ring and $M$ is an
$N$-nilpotent  $R$-bimodule. 
Under certain  conditions,  we characterize the Gorenstein flat-cotorsion modules over $T_R(M)$,  showing that a $T_R(M)$-module $(X,  u)$ is Gorenstein flat-cotorsion if and only if $u$ is monomorphic and $\Coker u$  is a Gorenstein flat-cotorsion $R$-module. As applications, we describe
the Gorenstein flat-cotorsion modules over some trivial
extension rings and Morita context rings.
\end{abstract}

\medskip

{\footnotesize {\bf Mathematics Subject Classification (2020)}:
16D40; 16D90; 16E05; 16E65; 18G25.}

\medskip

{\footnotesize {\bf Keywords}:
Gorenstein flat-cotorsion modules; trivial ring extensions; Morita rings
	tensor ring. }

\bigskip

\section{\large Introduction}

\indent\indent
The study of Gorenstein homological algebra stems from finitely generated modules of G-dimension zero over any noetherian ring,    introduced by Auslander and
Bridger as a generalization of finite generated projective modules \cite{Ase22}.  
Later, Enochs and Jenda introduced Gorenstein projective modules (not necessarily finitely generated) over any associative ring \cite{EG},   and dually,  the Gorenstein injective modules were defined. Another pursuit 
of this analogy led to the concept of the Gorenstein flat modules introduced by Enochs et al. in \cite{EG4}. Modeled on the classical projective, injective and flat modules, the three types of Gorenstein homological modules have become a cornerstone of Gorenstein homological algebra \cite{EJ2000,  H2004,  HJ2004}.

 Recently, Gorenstein flat modules that are also cotorsion have attracted increasing investigation. This focus is largely due to the fact that the category of Gorenstein flat modules is not Frobenius, whereas the subcategory of Gorenstein flat and cotorsion modules is.
Motivated by this, Christensen et al. introduced the notion of Gorenstein flat-cotorsion modules (GFC, for short), and they showed that over a right coherent ring, GFC are precisely 
the modules that are Gorenstein flat and cotorsion \cite{{CEP}}. We refer the readers to \cite{B25, CELT, L26, LM26, LMY25}
for more discussions on Gorenstein flat-cotorsion modules.

Throughout the paper, all rings are nonzero associative rings with identity and all modules are unitary. For a ring $R$, we adopt the convention that an $R$-module is a left $R$-module; right $R$-modules are viewed as modules over the opposite ring $R^{op}$. Let $R$ be such a ring and $M$ be an $R$-bimodule.  Following \cite{CPM1991}, a tensor ring of an $R$-bimodule $M$ is defined as $ T_R(M) = \bigoplus_{i=0}^{\infty} M^i$, where $M^0=R$ and $M^{i+1}=M\otimes _RM^i$ for $i\geqslant 0$. The addition of elements in $T_R(M)$ is componentwise,
 and the multiplication is induced by the isomorphism
 $(\bigoplus_{n=0}^{\infty} M^n )\otimes_R (\bigoplus_{m=0}^{\infty} M^m) \cong \bigoplus_{n,m=0}^{\infty} M^{n+m }$.  Examples of tensor rings include but are not limited to trivial
 extension rings, Morita contex trings, triangular matrix rings and so on. In this paper, we focus on the case that $M$ is nilpotent, that is, $M^{N+1}=0$ for some $N\geq 0$. Then a module over $T_R(M)$ can be written as a pair $(X, u)$ with $X$ an $R$-module and $u\in \Hom_{R}(M\otimes _RX,X)$.
  The classical homological properties of the tensor ring  $T_R(M)$ were studied in \cite{MH,RYV}. Recently, various Gorenstein homological modules over $T_R(M)$ have been characterized, including Gorenstein projective (injective, flat) modules \cite{C-L, DLST}, Ding projective (injective) modules \cite{ZS} and projectively coresolved Gorenstein flat modules \cite{TJ}. On the other hand, Wang-He-Jin investigated Gorenstein flat-cotorsion modules over trivial extension rings \cite{WHJ}. This development naturally motivates the central question of this work.

\textbf{Question.} How to describe Gorenstein flat-cotorsion module over $T_R(M)$ for an $N$-nilpotent $R$-bimodule $M$?

The main purpose of the paper is to answer this question and to provide further applications. We need the following Tor-vanishing
condition on $M$ introduced by Chen-Lu \cite{C-L}:
$${\rm (P)}\quad\quad\quad \ \ \ \ \Tor_{\ge 1}^R(M,    M^i) = 0 \ {\rm for \ all}\ i\geq1. $$ 
Under this condition, our results can be stated as follows.

\begin{theorem}\label{thm-1.1}{\rm( Theorem ~\ref{cor-ten-GFC})}
 Let $M$ be an $N$-nilpotent $R$-bimodule which satisfies the condition  {\rm(P)}.  If $M \otimes_R-$ perserves cotorsion modules,
  $\fd_R M < \infty$ and $\fd _{R^{op}}M < \infty$, then a $T_R(M)$-module $(X,  u)$ is Gorenstein flat-cotorsion if and only if $u$ is monomorphic and $\Coker u$  is a Gorenstein flat-cotorsion $R$-module. 
 \end{theorem}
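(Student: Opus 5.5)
We will use the characterization of Christensen et al.: a module is Gorenstein flat-cotorsion if and only if it is cotorsion and admits a totally acyclic complex of flat-cotorsion modules (an acyclic complex of flat-cotorsion modules that stays acyclic under $\Hom(-,F)$ for every flat-cotorsion $F$), with the module as a syzygy. Equivalently, it is Gorenstein flat in the sense of the projectively coresolved/flat-cotorsion theory, and cotorsion. The plan is to transport these two properties separately across the adjunctions between $R$-modules and $T_R(M)$-modules. Here $T=T_R(M)$, the induction functor is $\Ind=T\otimes_R-$, the functor $\Coker(u)$ is the left adjoint "cokernel" functor $C$, and the coinduction is $\Hom_R(T,-)$.

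\textbf{Step 1: flat-cotorsion $T$-modules.} We first show that a $T$-module is flat-cotorsion if and only if it has the form $\Ind(F)=(\bigoplus_{i}M^i\otimes_R F,\ \text{canonical }u)$ with $F$ a flat-cotorsion $R$-module, up to summands.
\begin{itemize}
\item Flatness of $\Ind(F)$ follows since $T$ is flat over $R$ up to condition (P) and $\fd M<\infty$.
\item Cotorsionness of $\Ind(F)$ uses the hypothesis that $M\otimes_R-$ preserves cotorsion modules, so every $M^i\otimes F$ is cotorsion. Then $\Ext^1_T(G,\Ind F)\cong \Ext^1_R(C G,F)$-type arguments, via the filtration $0\subset M^N\otimes F\subset\cdots$, give vanishing against flat $G$.
\item Conversely, a flat-cotorsion $(X,u)$ has $u$ monic, $\Coker u$ flat (Chen--Lu's description of flat $T$-modules under (P)), and cotorsion, since $C$ preserves cotorsion when its right adjoint preserves flats. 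The induced sequence $0\to M\otimes X\to X\to \Coker u\to0$ then splits up to the filtration, giving $X\cong\Ind(\Coker u)$.
\end{itemize}

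\textbf{Step 2: "only if".} Let $(X,u)$ be GFC, with a totally acyclic complex $\mathbb F$ of flat-cotorsion $T$-modules. Since $X$ is Gorenstein flat, the known result of Chen--Lu (via (P) and finite flat dimensions) gives that $u$ is monic and $\Coker u$ is Gorenstein flat; in fact $\Tor_{\ge1}^R(M,X)=0$. Applying $C$ to $\mathbb F$ gives, by Step 1, a complex of flat-cotorsion $R$-modules $C\mathbb F$. It is acyclic because all syzygies $X_j$ of $\mathbb F$ have $u$ monic, so $C$ is exact on them. For $F$ flat-cotorsion over $R$, we have $\Hom_R(C\mathbb F,F)\cong\Hom_T(\mathbb F,\mathrm{Z}F)$, where $\mathrm{Z}F=(F,0)$. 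We then resolve $\mathrm{Z}F$ by the finite coresolution
\[
0\to \mathrm{Z}F\to \Hom_R(T,F)\to\cdots,
\]
which has finite length thanks to nilpotency and $\fd_{R^{op}}M<\infty$, with terms of finite flat-cotorsion type. Dimension shifting then gives exactness of $\Hom_R(C\mathbb F,F)$. Cotorsionness of $\Coker u$ follows from Step 1's adjunction argument.

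\textbf{Step 3: "if".} Suppose $u$ is monic and $Y=\Coker u$ is GFC with a totally acyclic complex $\mathbb G$. Then $\Ind\mathbb G$ is a complex of flat-cotorsion $T$-modules by Step 1. It is acyclic because $\Tor^R_{\ge1}(M^i,G_j)$ and the Tor against syzygies vanish: Gorenstein flat modules are Tor-orthogonal to modules of finite flat dimension, and $M^i$ has finite flat dimension by (P) and induction. It is totally acyclic since $\Hom_T(\Ind\mathbb G,E)\cong\Hom_R(\mathbb G,E)$, and a flat-cotorsion $E$ restricts over $R$ to a flat-cotorsion module (a finite sum of $M^i\otimes F$). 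Hence $\Ind Y$ is GFC. Finally, $X$ is filtered by $M^i\otimes Y$ via $u$ monic. We will show that $X$ and $\Ind Y$ agree in the relevant stable sense by induction on $N$: use the exact sequences
\[
0\to \Ind(M\otimes X)\to \Ind X\to X\to0
\]
and closure of GFC under extensions and kernels of epis (resolving over right coherent rings is not available, so we use the Frobenius/totally acyclic description directly).

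\textbf{Main obstacle.} The hardest part is this last reduction from $\Ind Y$ to arbitrary monic $(X,u)$, together with verifying cotorsionness of $X$. Here we will try an induction along the nilpotency filtration, combined with the fact that the GFC class is closed under extensions and under cokernels of monomorphisms whose terms are GFC (thickness inside cotorsion Gorenstein flats).
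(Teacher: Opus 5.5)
Your proposal has a genuine gap exactly where you place the main obstacle. In Step 3, once you have $\Ind(\Coker u)\in\GFC(T_R(M))$, you try to reach an arbitrary $(X,u)$ with $u$ monic via $0\to\Ind(M\otimes_R X)\to\Ind X\to(X,u)\to0$ and closure properties of $\GFC$. Those closure properties fail. $\GFC$ is not closed under kernels of epimorphisms: over $\mathbb{Z}$, the sequence $0\to\mathbb{Z}\to\widehat{\mathbb{Z}}\to\widehat{\mathbb{Z}}/\mathbb{Z}\to0$ has flat-cotorsion middle and right terms but a non-cotorsion kernel. Nor is it closed under cokernels of monomorphisms: in a flat cover $F\to\mathbb{Z}/p$, both $F$ and its kernel are flat-cotorsion, yet $\mathbb{Z}/p$ embeds in no flat module. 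Moreover, $\Ind X$ has no reason to be GFC, since $X$ is only filtered by the non-GFC modules $M^i\otimes_R\Coker u$. As the paper stresses, this is precisely why the argument of \cite{DLST} does not transfer.

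The missing idea is to build the totally acyclic complex for $(X,u)$ by hand. For the left half, Lemma~\ref{lem-ext-zero} gives $\Ext^1_R(F,M\otimes_R X)=0$ for flat $F$. Its proof uses the sequences $0\to M^{i+1}\otimes_R X\to M^i\otimes_R X\to M^i\otimes_R\Coker u\to0$ (exact by admissibility), $N$-nilpotency, and cotorsionness of $M^i\otimes_R\Coker u$. Hence the epimorphism $F^{-1}\to\Coker u$ lifts to some $\eta:F^{-1}\to X$. Its adjoint $\widetilde b:\Ind F^{-1}\to(X,u)$ is onto by the snake lemma applied along $M^j\otimes_R-$. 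The kernel $(Z,\delta)$ of $\widetilde b$ again has $\delta$ monic (because $\Tor_1^R(M,X)=0$) and $\Coker\delta\in\GFC(R)$, so the step iterates. The right half comes from the construction in \cite{DLST}, using admissibility. Exactness of $\Hom_{T_R(M)}(-,\Ind\widetilde W)$ on the resulting complex $\Delta^\bullet$ then follows from the sequences $0\to\Ind(M^{i+1}\otimes_R\widetilde W)\to\Ind(M^i\otimes_R\widetilde W)\to(M^i\otimes_R\widetilde W,0)\to0$, together with an identification $\Hom_{T_R(M)}(\Delta^\bullet,(Y,0))\cong\Hom_R(F^\bullet,Y)$. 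That identification is not formal, because $\Delta^\bullet$ is not induced from a complex. It uses that the $F^0$-component of $(X,u)\to\Ind F^0$ factors through $\Coker u$ and that $\eta$ lifts $F^{-1}\to\Coker u$.

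Step 2 also rests on facts that are not available here. The phrase ``since $X$ is Gorenstein flat'' is unjustified: GFC modules are known to be Gorenstein flat only over right coherent rings, and $T_R(M)$ is not assumed coherent. The paper instead proves directly that $M\otimes_R U\mathbb{F}$ is acyclic, using a finite flat resolution of $M_R$ and $\Tor^R_{\ge1}(M,M^i\otimes_R F)=0$ from (P). This makes $u$ monic on every syzygy.

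Your coinduced coresolution of $(F,0)$ does not help either. $\Hom_R(T_R(M),F)$ is in general not flat over $T_R(M)$, and $\Hom_{T_R(M)}(\mathbb{F},\Hom_R(T_R(M),G))\cong\Hom_R(U\mathbb{F},G)$ has no evident exactness. What works is the induced resolution $0\to\Ind(M\otimes_R F)\to\Ind F\to(F,0)\to0$. Here $\Ind(M\otimes_R F)$ is cotorsion (since $M\otimes_R-$ preserves cotorsion) and of finite flat dimension (Lemma~\ref{DimI}). One then applies \cite[Lemma 2.1]{WHJ}: a totally acyclic complex of flat-cotorsion modules stays exact under $\Hom(-,W)$ for every cotorsion $W$ of finite flat dimension.

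The same lemma is what Step 3 actually needs. Your claim there that $\bigoplus_i M^i\otimes_R F$ is flat-cotorsion over $R$ is false: it is only cotorsion of finite flat dimension, which is exactly what admissibility encodes.

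A few smaller slips:
\begin{itemize}
\item Flatness of $\Ind F$ has nothing to do with $T_R(M)$ being flat over $R$.
\item $S$ does not preserve flat modules. Cotorsionness of $\Coker u$ in Step 1 should instead come from $F$ being a direct summand of the cotorsion module $U(\Ind F)$.
\item Tor-vanishing against GFC syzygies follows from $\fd_{R^{op}}M^i<\infty$ by dimension shifting along the flat half of the complex, not from Gorenstein flatness.
\end{itemize}
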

 
 Example~\ref{ex} shows that there exists a bimodule $M$ over some algebra satisfying
 all the conditions in the above theorem. The idea of our proof of Theorem~\ref{thm-1.1} is similar to that
in \cite{DLST}, but additional ingredients are needed.
Indeed, the proof of \cite[Proposition 2.11]{DLST} relies
 on the fact that every module is an image of a projective module and that the subcategory of
 Gorenstein projective modules is closed under kernels of epimorphisms. However,
 these properties do not hold for Gorenstein flat-cotorsion modules, and thus new arguments must be introduced to establish Theorem~\ref{thm-1.1}.
 As an application, we characterize the Gorenstein flat-cotorsion modules over some $1$-nilpotent trivial
extension rings and Morita context rings.
 
The structure of the paper is as follows. Section 2 collects basic material on tensor rings and introduces notation used throughout the paper. Section 3 is devoted to the proof of Theorem~\ref{thm-1.1}. Finally, Section 4 presents the applications.

\section{\large Prelimilaries}
\indent\indent In this section we fix our notation, recall relevant concepts, and present several basic facts. Throughout the paper, we denote by $\Mod (R)$ the category of left $R$-modules, 
and by $\Proj (R)$ (resp. $\Flat (R)$) the subcategory of $\Mod(R)$ consisting
of all projective (resp. flat) $R$-modules. For an $R$-module $X$, we denote by
$\pd(X)$ (resp. $\fd(X)$) the projective (resp. flat) dimension of $X$.

\noindent\textbf {2.1 Tensor rings.} Let $R$ be a ring and $M$ an $R$-bimodule,  a tensor ring of an R-bimodule $M$ is defined by $T_R(M) = \bigoplus_{i=0}^{\infty} M^i$, where $M^0=R$  and $M^{i+1}=M\otimes_R M^i $
 for $i\geqslant 0$.  In this paper,  we assume that $M$ is {\it $N$-nilpotent},
 that is, $ M^{N+1}=0$ for some $N\geqslant 0$, and
 we denote by $T_R(M)=\bigoplus_{i=0}^N M^i $, the {\it tensor ring} with respect to $M$.

From \cite{C-L}, a module over $ T_{R}(M)$ can be written as a pair $ (X,  u) $, where $ X \in\Mod(R) $ and $ u:M\otimes_{R}X\rightarrow X $ is an $R$-homomorphism. A morphism from $(X,   u)$ to $(X',   u')$ is an $R$-homomorphism $f \in \mathrm{Hom}_R(X,   X')$ such that  $f \circ u = u' \circ (M \otimes f)$.
A sequence
\[
(X,  u)\stackrel{f}{\longrightarrow}(X',  u')\stackrel{g}{\longrightarrow}(X'',  u'')
\]
in $\mathrm{Mod}(T_{R}(M))$ is exact if and only if the sequence $X\stackrel{f}{\longrightarrow}X'\stackrel{g}{\longrightarrow}X''$ is exact in $\Mod(R)$.

\noindent\textbf {2.2 Adjoint functors.} From \cite{C-L}, there are two adjoint pairs as follows:
\[
\begin{tikzcd}
\Mod(R) \arrow[r,   shift left=-0.7ex,   "S"'] & 
		\Mod(T_{R}(M))  \arrow[l,   shift left=-0.7ex,   "C"'] \arrow[r,   shift left=-0.7ex,   "U"'] &
	\Mod(R). \arrow[l,   shift left=-0.7ex,   "\Ind"']
\end{tikzcd}
\]
Here, $U$ is the forgetful functor
which maps a $T_{R}(M)$-module $(X,  u)$ to the underlying $R$-module $X$,
and for an $R$-module $X$,
$\Ind(X)$ is defined as $(\bigoplus_{i=0}^{N}(M^i\otimes_R X),  c_{X})$, where $c_{X}$ is the inclusion from $M\otimes_{R}(\bigoplus_{i=0}^{N}M^i\otimes_R X)\cong\bigoplus_{i=1}^{N}(M^i\otimes_{R}X)$ 
	to $\bigoplus_{i=0}^{N}(M^i\otimes_{R}X)$. More explicitly,  
	\[
	c_{X}=\begin{pmatrix}
		0 & 0 & 0 & \cdots & 0\\
		1 & 0 & 0 & \cdots & 0\\
		0 & 1 & 0 & \cdots & 0\\
		\vdots & \vdots & \vdots & \ddots & \vdots\\
		0 & 0 & 0 & \cdots & 1
	\end{pmatrix}_{(N+1)\times N}.
	\]
 For an $R$-homomorphism $f:X\to Y$,   the $T_{R}(M)$-homomorphism 
	$\Ind(f):\Ind(X)\to\Ind(Y)$ can be viewed as a formal diagonal matrix with 
	diagonal elements $M^i\otimes f$ for $0\leqslant i\leqslant N$.
	Moreover, the functor $S$
sends an $R$-module $X$ to the $T_{R}(M)$-module $(X,  0)$, 
and for a $T_{R}(M)$-module $(X,  u)$,  $C((X,  u))$ is defined as $\Coker u$.
 For a morphism $f:(X,  u)\to(Z,  w)$ in $\Mod(T_{R}(M))$,   the morphism 
	$C(f):\Coker u\to\Coker w$ is determined by the universal property of cokernels.

It is straightforward to verify that $C\circ\Ind=\Id_{\Mod(R)}$,   and by the Eilenberg-Watts theorem,   the functor 
$\Ind $ is naturally isomorphic to
$T_{R}(M)\otimes_{R}-$,  so one has $\Ind(X)\cong T_{R}(M)\otimes_{R}X$ 
for each $X\in \Mod(R)$. By \cite[Remark 1.5]{DLST},
a $T_{R}(M)$-module $(X,  u)$ gives rise to the following short exact sequence:
\[
0\to\Ind(M\otimes_{R}X)\xrightarrow{\phi_{(X,  u)}}\Ind(X)\xrightarrow{\varepsilon_{(X,  u)}}(X,  u)\to 0
\]
of $T_{R}(M)$-modules,  where
\[
\phi_{(X,  u)}=
\begin{pmatrix}
	-u & 0 & 0 & \cdots & 0\\
	1 & -M\otimes u & 0 & \cdots & 0\\
	0 & 1 & -M^2\otimes u & \cdots & 0\\
	\vdots & \vdots & \vdots & \ddots & \vdots\\
	0 & 0 & 0 & \cdots & 1
\end{pmatrix}_{(N+1)\times N},
\]
and
\[
\varepsilon_{(X,   u)} = 
\begin{pmatrix}
	1 &
	u &
	u \circ (M \otimes u) &
	\cdots &
	u \circ (M \otimes u) \circ \cdots \circ \left( M^{ N-1} \otimes u \right)
\end{pmatrix}.
\]

\noindent\textbf {2.3 Gorenstein flat-cotorsion modules.}
From \cite{Xu96}, an $R$-module $C$ is said to be {\it cotorsion} if $\Ext^1_R(F, C) = 0$ for any $F \in \Flat(R)$. For simplicity, we denote by
$${\rm FC(R)} = \{ M \in \Mod(R) \mid M \text{ is flat and cotorsion} \}.$$ 
Recall from \cite{CEP} that an exact complex $T^\bullet$
with $T^i \in \FC(R)$
is a {\it totally acyclic complex of flat-cotorsion
modules} if it remains exact after applying the functor
$\Hom_{R}(-,W)$ for every $W\in \FC(R)$. An $R$-module
$X$ is called
{\it Gorenstein flat-cotorsion} ($\GFC$, for short) if there is a totally acyclic complex
$T^\bullet$ of flat-cotorsion modules with
$X\cong\Ker(T^{0}\to T^{1})$. By \cite{CELT,CEP}, a Gorenstein flat and cotorsion module is 
GFC, and the converse also holds if $R$ is right coherent. However, this is not true for arbitrary rings.

We now record several auxiliary results that will be used frequently.
\begin{lemma}\label{P-F} For a tensor ring $T_{R}(M)$, the following equalities hold.

{\rm (1)(\cite[Lemma 1.9]{DLST})} $\Proj(T_{R}(M)) = \Ind(\Proj(R))$.
		
{\rm (2)(\cite[Lemma 2.8]{{ZS}})} $\Flat(T_{R}(M)) = \Ind(\Flat(R))$.
\end{lemma}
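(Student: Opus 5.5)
The plan is to prove both equalities by the same two-step pattern. First, $\Ind$ carries the relevant class of $R$-modules into the corresponding class of $T_R(M)$-modules. Second, every projective (resp. flat) $T_R(M)$-module $Q=(X,u)$ is recovered as $\Ind(C(Q))$, with $C(Q)=\Coker u$ projective (resp. flat). Two facts drive everything. One is the adjunction $(\Ind, U)$ from 2.2, with $U$ exact. The other is a nilpotency ("Nakayama") argument. If $(K,w)$ is a $T_R(M)$-module with $C((K,w))=0$, then $w$ is onto. Iterating, $K=w(M\otimes w)\cdots(M^{N}\otimes w)(M^{N+1}\otimes K)=0$. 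Since $C$ is right exact (it is a left adjoint to $S$), the same argument shows that a morphism $g$ of $T_R(M)$-modules with $C(g)$ surjective is itself surjective: apply the above to $\Coker g$.

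For (1): $\Ind$ is left adjoint to the exact functor $U$, so it preserves projectives; concretely $\Ind(R)\cong T_R(M)$ and $\Ind$ commutes with direct sums and summands. Hence $\Ind(\Proj(R))\subseteq \Proj(T_R(M))$. Conversely, let $Q=(X,u)$ be projective. Then $Q\oplus Q'\cong \Ind(R^{(I)})$, so $C(Q)\oplus C(Q')\cong C\Ind(R^{(I)})=R^{(I)}$ and $P:=C(Q)$ is projective. Projectivity of $P$ lets me choose an $R$-linear section $s\colon P\to X$ of $X\to\Coker u$. Its adjoint $g\colon \Ind(P)\to Q$ satisfies $C(g)=\Id_P$, so $g$ is surjective by the nilpotency argument. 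Since $Q$ is projective, $g$ splits: $\Ind(P)\cong Q\oplus K$. Applying $C$ gives $P\cong P\oplus C(K)$ compatibly with $C(g)=\Id$, so $C(K)=0$ and hence $K=0$. Thus $Q\cong\Ind(P)$.

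For (2): $\Ind(F)\cong T_R(M)\otimes_R F$ and $N\otimes_{T_R(M)}(T_R(M)\otimes_R F)\cong N\otimes_R F$, so $\Ind$ preserves flatness. Conversely, for flat $Q=(X,u)$, note that $C(Q)\cong R\otimes_{T_R(M)}Q$, where $R=T_R(M)/(\bigoplus_{i\ge1}M^i)$. Hence $N\otimes_R C(Q)\cong N\otimes_{T_R(M)}Q$ for every right $R$-module $N$, and $F:=C(Q)$ is flat. Applying $R\otimes_{T_R(M)}-$ to the sequence $0\to\Ind(M\otimes X)\to\Ind(X)\to Q\to0$ of 2.2 and using $\Tor_1^{T_R(M)}(R,Q)=0$ yields that $u$ is a monomorphism. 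The same argument with $M^i\otimes-$ gives that the filtration $X\supseteq \Im u\supseteq \Im u(M\otimes u)\supseteq\cdots$ has subquotients $M^i\otimes_R F$.

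The main obstacle is producing a comparison map $\Ind(F)\to Q$, since $F$ is only flat and the section $s$ used in (1) need not exist. I would try Lazard's theorem: write $Q=\varinjlim P_j$ with $P_j$ finitely generated projective. By (1), $P_j\cong\Ind(C(P_j))$, and the canonical sections for these $P_j$ can be chosen compatibly after replacing the system by one indexed by its finite subdiagrams (the standard trick in the proof of Lazard's theorem). Both $\Ind$ and $C$ commute with direct limits, so passing to the limit gives a map $\Ind(F)\to Q$ with $C(\,\cdot\,)=\Id_F$. It is surjective by nilpotency, and injective because it induces isomorphisms $M^i\otimes F\to$ the $i$-th subquotient of the filtration above. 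Should the compatibility of sections fail, the fallback is to cite \cite[Lemma 2.8]{ZS} directly for the inclusion $\Flat(T_R(M))\subseteq\Ind(\Flat(R))$.
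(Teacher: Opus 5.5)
Your proof of (1) is correct. In (2), several steps are also fine: the inclusion $\Ind(\Flat(R))\subseteq\Flat(T_R(M))$, the flatness of $\Coker u$, the injectivity of $u$ and of every $M^i\otimes u$ (the sequence $0\to M\otimes_RX\to X\to\Coker u\to0$ has flat right end, so it stays exact under $M^i\otimes_R-$), and the final injectivity check. The gap is the Lazard step, and it cannot be filled.

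Under the $(\Ind,U)$-adjunction, a map $g\colon\Ind(F)\to Q$ with $C(g)=\Id_F$ is the same as an $R$-linear section of $\rho\colon X\to\Coker u$. A direct system of projectives with compatible canonical sections would yield such a section in the limit, because $C$ and $U$ commute with direct limits. So your step amounts to claiming that $\rho$ splits for every flat $(X,u)$. By your own argument, that is equivalent to $(X,u)\cong\Ind(\Coker u)$. It fails in general:
\begin{itemize}
\item Let $R=\mathbb{Z}\times\mathbb{Z}$ and $M=\mathbb{Z}$ with $(a,b)\cdot m\cdot(c,d)=bmc$. Then $m\otimes m'=m(1,0)\otimes m'=m\otimes(1,0)m'=0$, so $M\otimes_RM=0$.
\item Hence $T_R(M)$ is the lower triangular ring $\begin{pmatrix}\mathbb{Z}&0\\ \mathbb{Z}&\mathbb{Z}\end{pmatrix}$. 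Its modules are triples $(X_1,X_2,f\colon X_1\to X_2)$, and $\Ind(F_1,F_2)=(F_1,F_1\oplus F_2,\mathrm{incl})$.
\item Pick a non-split extension $0\to\mathbb{Z}\xrightarrow{\iota}E\to\mathbb{Q}\to0$; one exists since $\Ext^1_{\mathbb{Z}}(\mathbb{Q},\mathbb{Z})\neq0$. Put $Q=(\mathbb{Z},E,\iota)$.
\item For each finitely generated $H$ with $\iota(\mathbb{Z})\subseteq H\subseteq E$, the group $H/\iota(\mathbb{Z})$ is a finitely generated subgroup of $\mathbb{Q}$, hence cyclic. So $(\mathbb{Z},H,\iota)\cong\Ind(\mathbb{Z},H/\iota(\mathbb{Z}))$ is projective.
\item $Q$ is the directed union of these submodules, hence flat.
\item Yet $Q\not\cong\Ind(F)$ for every $F$. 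The structure map of $\Ind(F)$ is a split monomorphism, so an isomorphism would force $\iota$ to split.
\end{itemize}
In your terms, no choice of indexing system can make the sections compatible, since compatible complements of $\iota(\mathbb{Z})$ would glue to a complement in $E$.

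So the inclusion $\Flat(T_R(M))\subseteq\Ind(\Flat(R))$ is false at this generality. This holds even for the $M$ above, which is finitely generated projective on both sides and satisfies every hypothesis of Theorem~\ref{cor-ten-GFC}. Falling back on the citation therefore cannot close the gap. What your method does prove is the correct substitute. A flat $(X,u)$ has $u$ and all $M^i\otimes u$ monic, and $\Coker u$ flat. Such a module lies in $\Ind(\Flat(R))$ exactly when $X\to\Coker u$ splits. The places where the paper invokes (2), Lemmas~\ref{tensor-cor} and \ref{tensor-FC}, need separate justification. In Lemma~\ref{tensor-FC}, for instance, this splitting would have to be derived from the cotorsion hypothesis.
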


\begin{lemma}\label{Ext}{\rm (\cite [Lemma 3.7]{CRZ})}
	Let $R$ and $S$ be rings,   $(F,   G)$ an adjoint pair with $F: \Mod(R) \to \Mod(S)$.
	For an $X \in\Mod(R)$,   if there is a short exact sequence $0 \to K \to P \to X \to 0$ with $P\in \Proj(R)$ such that $0 \to FK \to FP \to FX \to 0$ is exact with $FP\in \Proj(S)$,  then
		$\operatorname{Ext}_{S}^{1}(FX,   Y) \cong \operatorname{Ext}_{R}^{1}(X,   GY)$   
for any $Y\in \Mod(S)$. 
\end{lemma}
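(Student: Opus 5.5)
The statement to prove is Lemma~\ref{Ext}: given $0\to K\to P\to X\to 0$ with $P$ projective, such that $0\to FK\to FP\to FX\to 0$ is exact and $FP$ is projective, we want $\Ext^1_S(FX,Y)\cong\Ext^1_R(X,GY)$ for every $Y\in\Mod(S)$. The plan is to compute both sides as cokernels of restriction maps on $\Hom$ groups, and then match these via the adjunction isomorphism $\Hom_S(F-,Y)\cong\Hom_R(-,GY)$, which is natural in the first variable.

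\textbf{Step 1: the $R$-side.} Apply $\Hom_R(-,GY)$ to $0\to K\to P\to X\to 0$. Since $P$ is projective, $\Ext^1_R(P,GY)=0$, so the long exact sequence gives
\[
\Ext^1_R(X,GY)\cong\Coker\bigl(\Hom_R(P,GY)\to\Hom_R(K,GY)\bigr).
\]

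\textbf{Step 2: the $S$-side.} Apply $\Hom_S(-,Y)$ to the exact sequence $0\to FK\to FP\to FX\to 0$. Since $FP$ is projective, $\Ext^1_S(FP,Y)=0$, and similarly
\[
\Ext^1_S(FX,Y)\cong\Coker\bigl(\Hom_S(FP,Y)\to\Hom_S(FK,Y)\bigr).
\]
This is exactly where the two hypotheses (exactness after applying $F$, and projectivity of $FP$) are used.

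\textbf{Step 3: matching the cokernels.} The adjunction isomorphisms $\Hom_S(FP,Y)\cong\Hom_R(P,GY)$ and $\Hom_S(FK,Y)\cong\Hom_R(K,GY)$ are natural in the first variable. Hence they form a commutative square with the restriction maps induced by $K\to P$ and by $FK\to FP$. Isomorphic horizontal arrows in a commutative square induce an isomorphism on cokernels, so
\[
\Ext^1_S(FX,Y)\cong\Ext^1_R(X,GY).
\]

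The only point needing care is the naturality check in Step 3. It is immediate from the definition of an adjunction, so I expect no real obstacle.
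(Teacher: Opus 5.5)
Your proof is correct: the paper gives no argument for this lemma and only cites \cite[Lemma 3.7]{CRZ}, and your comparison of the two $\Hom$--$\Ext$ exact sequences through the adjunction isomorphism, natural in the first variable, is the standard argument for it. The one point you leave implicit is that the adjunction bijections are isomorphisms of abelian groups, which your cokernel comparison needs; this holds automatically because adjoint functors between module categories are additive.
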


\begin{lemma}\label{TG}
 For any $F\in\Flat(R)$ and $(X,  u)\in\Mod(T_{R}(M))$,   we have
\[
\Ext_{T_{R}(M)}^{1}(\Ind(F),  (X,  u))\cong\Ext_{R}^{1}(F,  X).
\]
\end{lemma}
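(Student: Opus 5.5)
We reduce to Lemma~\ref{Ext}, applied to the adjoint pair $(\Ind, U)$ with $F=\Ind$ and $G=U$; then $GY=U(X,u)=X$. The only hypothesis to arrange is a suitable presentation of $F$.

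\textbf{Step 1: a projective presentation of $F$.} Take a short exact sequence
\[
0\to K\to P\to F\to 0
\]
of $R$-modules with $P\in\Proj(R)$. Since $F$ is flat, applying $M^i\otimes_R-$ preserves exactness for every $i$, because $\Tor_1^R(M^i,F)=0$. Hence
\[
0\to \Ind(K)\to \Ind(P)\to \Ind(F)\to 0
\]
is exact in $\Mod(T_R(M))$. This uses the description of $\Ind$ as the direct sum $\bigoplus_{i=0}^N M^i\otimes_R-$ and the fact, recalled in 2.1, that exactness of $T_R(M)$-modules is checked on underlying $R$-modules.

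\textbf{Step 2: projectivity of $\Ind(P)$.} By Lemma~\ref{P-F}(1), $\Ind(P)\in\Proj(T_R(M))$. Alternatively, $\Ind\cong T_R(M)\otimes_R-$ sends $R$ to the free module $T_R(M)$ and preserves direct sums and summands.

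\textbf{Step 3: conclusion.} The hypotheses of Lemma~\ref{Ext} are now satisfied, so
\[
\Ext^1_{T_R(M)}(\Ind(F),(X,u))\cong \Ext^1_R(F,U(X,u))=\Ext^1_R(F,X).
\]

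The only point requiring care is the exactness in Step 1. Exactness of $M^i\otimes_R-$ on this particular sequence needs only the flatness of the end term $F$, not any flatness of $M$. No condition such as (P) is needed here.
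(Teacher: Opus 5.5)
Your proof is correct and essentially the paper's own argument: take a projective presentation $0\to K\to P\to F\to 0$, use $\Tor_1^R(T_R(M),F)=0$ to keep it exact after applying $\Ind$, note that $\Ind(P)$ is projective, and apply Lemma~\ref{Ext} to the adjoint pair $(\Ind,U)$. The only cosmetic difference is that the paper gets projectivity of $\Ind(P)$ from exactness of the right adjoint $U$, whereas you cite Lemma~\ref{P-F}(1) or the direct free-module argument.
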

\begin{proof}
	Since $( \Ind,   U )$ is an adjoint pair with $U$
	exact, we get that the functor $\Ind$ preserves projective modules.
	For any $F\in\Flat(R)$, consider the following short exact sequence
	\[
	0 \to K \to P \to F \to 0  
	\]
	with $P\in \Proj(R)$. Applying the right exact
	functor  $\Ind = T_R(M) \otimes_R -$, and noting that $\Tor_1^R(T_R(M),   F) = 0$, we have a short exact sequence
	\[
	0 \to \Ind(K) \to \Ind(P) \to \Ind(F) \to 0.
	\]
	Then the assertion follows from Lemma  \ref{Ext}.
\end{proof}

As a consequence, we get the following characterizations of flat and cotorsion
modules over $T_R(M)$.

\begin{lemma}\label{tensor-cor}
	$(X,   u)$ is a cotorsion  $T_R(M)$-module if and only if $X$ is a cotorsion $R$-module.
\end{lemma}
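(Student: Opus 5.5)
Both implications follow from Lemma \ref{TG}, together with the description of flat $T_R(M)$-modules in Lemma \ref{P-F}(2). By definition, $(X,u)$ is cotorsion precisely when $\Ext^1_{T_R(M)}(G,(X,u))=0$ for every flat $T_R(M)$-module $G$. Lemma \ref{P-F}(2) says every such $G$ is isomorphic to $\Ind(F)$ for some $F\in\Flat(R)$. Since $\Ext^1$ is invariant under isomorphism in the first variable, we get
\[
(X,u)\ \text{is cotorsion} \iff \Ext^1_{T_R(M)}(\Ind(F),(X,u))=0 \ \text{ for all } F\in\Flat(R).
\]

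Lemma \ref{TG} gives, for each $F\in\Flat(R)$, an isomorphism
\[
\Ext^1_{T_R(M)}(\Ind(F),(X,u))\cong\Ext^1_R(F,X).
\]
Substituting this into the previous display, the vanishing condition becomes $\Ext^1_R(F,X)=0$ for all $F\in\Flat(R)$. That is exactly the statement that $X$ is a cotorsion $R$-module, which proves both directions at once.

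The only step that needs care is the inclusion $\Flat(T_R(M))\subseteq\Ind(\Flat(R))$ from Lemma \ref{P-F}(2). Without it, the forward direction ($X$ cotorsion $\Rightarrow$ $(X,u)$ cotorsion) would fail: we could only test against flat modules of the form $\Ind(F)$, not against arbitrary flat $T_R(M)$-modules. The reverse direction needs only that $\Ind(F)$ is flat for each flat $F$, which the same lemma also supplies. Since Lemma \ref{P-F}(2) is quoted from \cite{ZS}, no real obstacle remains.
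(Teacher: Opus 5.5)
Your proposal is correct and is essentially the paper's proof, which simply cites Lemma~\ref{P-F} and Lemma~\ref{TG}. You spell out the same argument in more detail, and you correctly note which half of $\Flat(T_R(M))=\Ind(\Flat(R))$ each direction uses.
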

\begin{proof}
	It follows from Lemma \ref{P-F} and \ref{TG}.
\end{proof}

\begin{lemma}\label{tensor-FC}
If $(X,   u)\in \FC(T_R(M))$, then $(X,   u) \cong \operatorname{Ind}(F)$ for some $F\in \FC(R)$.
\end{lemma}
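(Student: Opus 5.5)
Since $(X,u)$ is flat, Lemma~\ref{P-F}(2) gives $(X,u)\cong \Ind(F)$ for some $F\in\Flat(R)$. Only cotorsionness of $F$ remains to be checked, and I will obtain it by transferring the cotorsion property from $(X,u)$ to $F$.

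By Lemma~\ref{tensor-cor}, $(X,u)$ cotorsion means the underlying $R$-module $X$ is cotorsion. Here
\[
X\cong U(\Ind(F))=\bigoplus_{i=0}^{N} M^i\otimes_R F ,
\]
and the summand for $i=0$ is $M^0\otimes_R F\cong F$. A direct summand of a cotorsion module is cotorsion, since for any flat $G$ the group $\Ext^1_R(G,-)$ commutes with finite direct sums. Hence $F$ is cotorsion, and $F\in\FC(R)$ with $(X,u)\cong\Ind(F)$.

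An alternative route avoids the summand argument by using $C\circ\Ind=\Id_{\Mod(R)}$. We have $F\cong C(\Ind(F))=\Coker c_F$, and $c_F$ is a split inclusion onto the summands with $i\ge 1$, so again $F$ is a direct summand of $X$. There is no real obstacle in either route; the only points to verify are the identification of $U\Ind(F)$ as the direct sum above and the closure of cotorsion modules under direct summands, both of which are immediate.
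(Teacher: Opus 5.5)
Your proposal is correct and follows essentially the same approach as the paper. Both first use Lemma~\ref{P-F}(2) to write $(X,u)\cong\Ind(F)$ with $F$ flat, then use Lemma~\ref{tensor-cor} to see that $\bigoplus_{i=0}^N M^i\otimes_R F$ is cotorsion, and conclude that its summand $F$ is cotorsion because $\Ext^1_R(F',-)$ vanishes on the sum for every flat $F'$.
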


\begin{proof}
	Since $(X,   u)$ is flat, it follows from Lemma~\ref{P-F} 
that there exists $F \in \operatorname{Flat}(R)$ such that
$	(X,   u) \cong \operatorname{Ind}(F)$. It remains to show that
$F$ is cotorsion.
Since $(X,  u) \cong ( \bigoplus_{i=0}^{N} (M^{i} \otimes_{R} F),   c_{F} ) $
and $(X,   u)$ is cotorsion, it follows from Lemma~\ref{tensor-cor} that $ \bigoplus_{i=0}^{N} (M^{i} \otimes_{R} F)$ is also cotorsion, that is,  $
 \operatorname{Ext}_R^1 ( F',  \,   \bigoplus_{i=0}^N (M^i \otimes_R F) ) = 0
 $ for any $ F' \in \Flat(R)$. Therefore, we have
 $\operatorname{Ext}_R^1(F',   F) = 0$ for any $ F' \in \Flat(R)$, and thus
 $F$ is a cotorsion $R$-module. 
\end{proof}

\begin{lemma}\label{temsor-FC1}
	If $M \otimes_R-$ preserves cotorsion modules, then $\FC(T_R(M)) =\Ind(\FC(R))$.
\end{lemma}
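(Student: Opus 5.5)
The inclusion $\FC(T_R(M))\subseteq \Ind(\FC(R))$ is exactly Lemma~\ref{tensor-FC}, so only the reverse inclusion needs proof. Let $F\in\FC(R)$. We must show that $\Ind(F)=\bigl(\bigoplus_{i=0}^{N}(M^i\otimes_R F),\,c_F\bigr)$ is both flat and cotorsion as a $T_R(M)$-module.

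Flatness is immediate from Lemma~\ref{P-F}(2), since $F\in\Flat(R)$. For cotorsion, Lemma~\ref{tensor-cor} says it suffices to show that the underlying $R$-module $\bigoplus_{i=0}^{N}(M^i\otimes_R F)$ is cotorsion. This is a finite direct sum, and $\Ext^1_R(F',-)$ commutes with finite direct sums. So it is enough to show that each summand $M^i\otimes_R F$, for $0\le i\le N$, is a cotorsion $R$-module.

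We argue by induction on $i$.
\begin{itemize}
\item For $i=0$ we have $M^0\otimes_R F\cong F$, which is cotorsion by hypothesis.
\item For the inductive step, $M^{i+1}\otimes_R F=(M\otimes_R M^i)\otimes_R F\cong M\otimes_R(M^i\otimes_R F)$ by associativity of the tensor product. Since $M^i\otimes_R F$ is cotorsion by the inductive hypothesis, and $M\otimes_R-$ preserves cotorsion modules by assumption, $M^{i+1}\otimes_R F$ is cotorsion.
\end{itemize}
Hence every summand is cotorsion, so their finite direct sum is cotorsion, and therefore $\Ind(F)\in\FC(T_R(M))$.

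There is no real obstacle here. The only points needing care are the associativity identification $M^{i+1}\otimes_R F\cong M\otimes_R(M^i\otimes_R F)$ and the fact that cotorsion modules are closed under finite direct sums.
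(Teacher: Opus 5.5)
Your proof is correct and follows the same route as the paper. You get the inclusion $\subseteq$ from Lemma~\ref{tensor-FC} and flatness of $\Ind(F)$ from Lemma~\ref{P-F}(2). You then reduce cotorsionness, via Lemma~\ref{tensor-cor}, to the underlying module $\bigoplus_{i=0}^N(M^i\otimes_R F)$; the paper handles that step in one line, and you spell out the induction on $i$ and the closure of cotorsion modules under finite direct sums.
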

\begin{proof}
By Lemma~\ref{tensor-FC}, we have the inclusion $ \FC(T_R(M)) \subseteq\Ind(\FC(R)) $. It remains to establish the converse inclusion, that is, $ \Ind(\FC(R)) \subseteq \FC(T_R(M)) $. To this end, let $X\in \FC(R)$. Then, by Lemma~\ref{P-F}, the induced module 
$\Ind(X)$ is flat. Now we claim that $\Ind(X)$ is also cotorsion.
Since $\Ind(X) = ( \bigoplus_{i=0}^N (M^i \otimes_R X),   \ c_X )$, it suffices, in view of Lemma~\ref{tensor-cor}, to prove that $\bigoplus_{i=0}^N (M^i \otimes_R X)$ is cotorsion, and this is due to the hypothesis that 
$M \otimes_R-$ preserves cotorsion modules.
\end{proof}

Next, we will give a sufficient condition under which the functor
$M \otimes_R-$ preserves cotorsion modules.

\begin{lemma}\label{lem-pre-cor}
If $M$ is a finitely-generated projective right $R$-module, then $M \otimes_R-$ preserves cotorsion modules.
\end{lemma}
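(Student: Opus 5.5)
Let $C$ be a cotorsion left $R$-module and $M_R$ finitely generated projective. We must show $\Ext^1_R(F, M\otimes_R C)=0$ for every flat left $R$-module $F$. The plan is to reduce to the case $M_R=R^n$. Since the functor $M\otimes_R-$ is additive in $M$, it commutes with finite direct sums and direct summands. Since $\Ext^1_R(F,-)$ is additive, the class of cotorsion modules is closed under finite direct sums and direct summands.

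Since $M_R$ is finitely generated projective, there is a right $R$-module $M'$ with $M\oplus M'\cong R^n$ as right $R$-modules. Then, as abelian groups (in fact as left $R$-modules, see below),
\[
(M\otimes_R C)\oplus(M'\otimes_R C)\cong R^n\otimes_R C\cong C^n .
\]
The subtlety is that $M\otimes_R C$ carries its left $R$-module structure from the left action on $M$, and this action is not respected by the decomposition $M\oplus M'\cong R^n$. So the naive summand argument only works at the level of abelian groups, and we must handle the left structure differently.

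To do so, I will use the dual basis. Write $M^*=\Hom_{R^{op}}(M,R)$, which is an $R$-bimodule. Since $M_R$ is finitely generated projective, there is a natural isomorphism of left $R$-modules
\[
M\otimes_R C\cong \Hom_R(M^*,C),
\]
where $M^*$ is a finitely generated projective left $R$-module and the left $R$-action on $\Hom_R(M^*,C)$ comes from the right $R$-action on $M^*$. This is the standard evaluation isomorphism $m\otimes c\mapsto(\varphi\mapsto\varphi(m)c)$. It is an isomorphism for $M=R$, hence for finite sums and summands, and it is natural and compatible with the left action.

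Next I use the adjunction
\[
\Hom_R\big(F,\Hom_R(M^*,C)\big)\cong\Hom_R(M^*\otimes_R F,C).
\]
Take a projective resolution of $F$. Since $M^*$ is projective as a left $R$-module, it is flat as a right $R$-module, so $M^*\otimes_R-$ is exact and sends projectives to projectives. Also, $\Hom_R(M^*,-)$ is exact. Together these give
\[
\Ext^1_R\big(F,\Hom_R(M^*,C)\big)\cong\Ext^1_R(M^*\otimes_R F,C).
\]
This is exactly the setting of Lemma~\ref{Ext} for the adjoint pair $(M^*\otimes_R-,\Hom_R(M^*,-))$.

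Finally, $M^*\otimes_R F$ is flat because $F$ is flat and $M^*\otimes_R -$ preserves flatness: it is a direct limit of finitely generated projectives, or simply, $M^*$ is a summand of $R^n$ as a right module. Since $C$ is cotorsion, $\Ext^1_R(M^*\otimes_R F,C)=0$, so $M\otimes_R C$ is cotorsion.

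The main obstacle is the left-module-structure issue flagged above; the $\Hom$-description via the dual basis handles it cleanly. The only care needed is in checking that the evaluation map is left $R$-linear.
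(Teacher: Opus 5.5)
Your route is the paper's: you identify $M\otimes_R-$ with $\Hom_R(M^*,-)$, pass to the left adjoint $M^*\otimes_R-$, apply Lemma~\ref{Ext} to $0\to K\to P\to F\to 0$ with $F$ flat, and use that $M^*\otimes_R-$ preserves flat modules. However, two of your justifications fall into the same left/right trap you warned about. The right $R$-structure of $M^*$ comes from the left $R$-structure of $M$, and nothing is assumed about that. So ``$M^*$ is projective on the left, hence flat on the right'' is false, and so is ``$M^*$ is a summand of $R^n$ as a right module''. For a counterexample, take $R=k[x]/(x^2)$ and $M=R_R$ with left action $r\cdot m=\sigma(r)m$, where $\sigma\colon R\to R$ is the ring map with $x\mapsto 0$. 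Then $M^*\cong R$ with $x$ acting as zero on the right, so $M^*_R\cong k\oplus k$. This is not flat, and $M^*\otimes_R-$ is not exact.

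Both conclusions you needed are still true, for other reasons, and this is how the paper argues. For the Ext isomorphism, $M^*\otimes_R P_\bullet$ only has to be exact for a projective resolution $P_\bullet\to F$. That holds because $\Tor^R_{\geq 1}(M^*,F)=0$ when $F$ is flat. As you say, projectivity of $M^*\otimes_R P$ uses only that ${}_RM^*$ is projective. Flatness of $M^*\otimes_R F$ follows from your first suggestion, read correctly. By Lazard, $F$ is a direct limit of finitely generated free modules. The functor $M^*\otimes_R-$ commutes with direct limits and sends $R^n$ to $(M^*)^n$, which is projective as a left module, so $M^*\otimes_R F$ is flat. This is the paper's ``preserves projectives and filtered colimits''. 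Even if $M^*_R$ were a summand of $R^n$, that would only split $M^*\otimes_R F$ as abelian groups, which is the problem you raised at the start. With these two repairs your proof is complete and coincides with the paper's.
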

\begin{proof}
Since $M$ is a finitely-generated projective right $R$-module, we have an isomorphism of functors
$M \otimes_R-\cong \Hom _R(\Hom _{R^{op}}(M,R),-)$, and thus $M \otimes_R-$ has a left adjoint
$\Hom _{R^{op}}(M,R)\otimes_R- $. Let $F=\Hom _{R^{op}}(M,R)\otimes_R-$ and $G=M \otimes_R-$.
Since $G$ is exact, we infer that $F$ preserves projective modules. Moreover, $F$
preserves the exactness of the exact sequence $0\rightarrow K\rightarrow P\rightarrow W\rightarrow 0$
with $W\in \Flat (R)$ since $\Tor _1^R(\Hom _{R^{op}}(M,R), W)=0$. Then it follows from
Lemma~\ref{Ext} that $$\operatorname{Ext}_{R}^{1}(FW,   X) \cong \operatorname{Ext}_{R}^{1}(W,   GX),$$
for any $W\in \Flat (R)$ and $X\in \Mod (R)$. Since $F$ preserves projective modules and filtered colimits,
we have that $F$ preserves flat modules. Now the above isomorphism implies that $G$ preserves cotorsion modules.      
\end{proof}

\section{Gorenstein flat-cotorsion modules over tensor rings}
\indent\indent In this section, we will give the proof Theorem~\ref{thm-1.1}.
In order to describe the Gorenstein flat-cotorsion modules over tensor rings, we need the following compatible and admissible conditions, which were first
defined in \cite{C-L, Z13} for
finite generated Gorenstein projective modules, and were further considered for infinite generated 
Gorenstein projective (injective) modules \cite{DLST} and $\mathcal{X}$-Gorenstein projective modules \cite{ZS}. 
Now we need the version of Gorenstein flat-cotorsion modules. 

\begin{definition} {\rm An $R$-bimodule $M$ is said to be \emph{compatible} if the following two conditions hold for a totally acyclic complex $T^\bullet$ of flat-cotorsion  $T_R(M)$-modules:

(C1) The complex $M \otimes_R T^\bullet$ is acyclic;
 
(C2) The complex $\Hom_{T_R(M)}(T^\bullet,   \Ind(M \otimes_R F))$ is acyclic for each $F \in \FC(R)$.}
\end{definition} 

\begin{definition} {\rm  An $R$-bimodule $M$ is said to be \textit{admissible} if
\[
\Ext_R^1(W,   M^i \otimes_R F) = 0 = \Tor_1^R(M,   M^i \otimes_R W)
\]
for each $W \in \GFC(R)$,   $F \in \FC(R)$ and $i \ge 0$.
}
\end{definition} 

From \cite{C-L}, the condition (P) for an $R$-bimodule $M$ is defined as follow:
$${\rm (P)}\quad\quad\quad \ \ \ \ \Tor_{\ge 1}^R(M,    M^i) = 0 \ {\rm for \ all}\ i\geq1. $$
Inspired by \cite[Proposition 2.7]{DLST}, we will give a sufficient condition for a module $M$ to be compatible and admissible. We need the following two lemmas.  
\begin{lemma}\label{Dim}
	{\rm (\cite[Theorem 9.48]{RG1979})}
	Let $R$ and $S$ be rings,   and $U$ an $R$-$S$-bimodule. If	$
	\fd(_R U) < \infty$, then	$
	\fd(_R(U\otimes_{S}F)) < \infty
	$ for any $F\in \Flat(S)$.
\end{lemma}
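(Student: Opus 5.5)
The plan is to prove the sharper bound $\fd(_R(U\otimes_S F))\le \fd(_R U)$ for every $F\in\Flat(S)$. It rests on two facts: Lazard's theorem, which writes a flat module as a filtered colimit of finitely generated free modules, and the fact that $\Tor$ commutes with filtered colimits. Put $n=\fd(_R U)<\infty$. It suffices to show that $\Tor_k^R(Y,U\otimes_S F)=0$ for every right $R$-module $Y$ and every $k>n$.

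First I treat the free case. For $F=S^{(m)}$ with $m$ finite, there is an isomorphism of left $R$-modules $U\otimes_S S^{(m)}\cong U^{(m)}$, since the left $R$-structure comes from $U$. Because $\Tor$ commutes with finite direct sums, $\Tor_k^R(Y,U^{(m)})\cong \Tor_k^R(Y,U)^{(m)}=0$ for $k>n$.

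For a general flat $F$, I apply Lazard's theorem to write $F\cong \varinjlim_{\lambda} S^{(m_\lambda)}$ as a filtered colimit of finitely generated free left $S$-modules. The functor $U\otimes_S-$ preserves colimits, so $U\otimes_S F\cong \varinjlim_\lambda U\otimes_S S^{(m_\lambda)}$ as left $R$-modules; this is compatible because the $R$-action is natural in the $S$-module variable.

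The last step uses that $\Tor_k^R(Y,-)$ commutes with filtered colimits. To see this, compute it with a flat (for instance projective) resolution of $Y$; tensoring with that resolution commutes with colimits, and filtered colimits are exact in $\Mod(\mathbb Z)$, so they commute with homology. This gives
\[
\Tor_k^R(Y,U\otimes_S F)\cong\varinjlim_\lambda \Tor_k^R(Y,U\otimes_S S^{(m_\lambda)})=0 \quad\text{for } k>n.
\]
Hence $\fd(_R(U\otimes_S F))\le n<\infty$.

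No step is genuinely hard. The main point to check is that the colimit description is compatible with the left $R$-module structure, so that the $\Tor$-vanishing really computes the $R$-flat dimension. An alternative route, avoiding Lazard, would take a flat resolution $0\to Q_n\to\cdots\to Q_0\to U$ of $_RU$ and tensor it over $S$ with $F$, using exactness of $-\otimes_S F$. However, this requires the $Q_i$ to be bimodules with $R$-flat terms after tensoring, which is less clean, so I would use the colimit argument.
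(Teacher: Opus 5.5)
Your proof is correct, and it gives the sharper bound $\fd(_R(U\otimes_S F))\le\fd(_RU)$. The compatibility point you flag is fine: $U\otimes_S-\colon\Mod(S)\to\Mod(R)$ is left adjoint to $\Hom_R(U,-)$, so it preserves the Lazard colimit in $\Mod(R)$.

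The paper gives no proof of this lemma; it only cites Rotman's book. So there is no internal argument to compare against, but Lazard's theorem is more machinery than you need. Resolve $Y$ instead of $U$, which also removes the bimodule-resolution worry in your last paragraph. Take a projective resolution $P_\bullet\to Y$ of right $R$-modules. Then $P_\bullet\otimes_R(U\otimes_S F)\cong(P_\bullet\otimes_R U)\otimes_S F$. Since $P_\bullet\otimes_R U$ is a complex of right $S$-modules and $-\otimes_S F$ is exact, tensoring with $F$ commutes with homology, so
\[
\Tor_k^R(Y,U\otimes_S F)\cong\Tor_k^R(Y,U)\otimes_S F ,
\]
which vanishes for $k>\fd(_RU)$.

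This change-of-rings isomorphism is the standard textbook route. It is elementary and describes the Tor groups explicitly. Your colimit argument is the more portable template: any class of left $R$-modules that contains $U$ and is closed under finite direct sums and filtered colimits also contains $U\otimes_S F$ for every flat $F$. The price is invoking Lazard's theorem, which this statement does not need.
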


\begin{lemma}\label{DimI}
	Let $M$ be an $R$-bimodule satisfying the condition {\rm (P)}. If $\fd_R M < \infty$,   
	then $\fd_{T_R(M)}(\Ind(M\otimes_R F)) < \infty	$ for any $F\in \Flat(R)$.
\end{lemma}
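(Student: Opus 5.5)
We want to show that $\Ind(M\otimes_R F)$ has finite flat dimension over $T_R(M)$ whenever $F\in\Flat(R)$. The plan is to take a finite flat resolution of the $R$-module $M\otimes_R F$, apply $\Ind$, and check that the result is still a flat resolution.

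First, since $\fd_R M<\infty$, Lemma~\ref{Dim} (applied with $U=M$, $S=R$) gives $\fd_R(M\otimes_R F)=n<\infty$. Choose a flat resolution
\[
0\to F_n\to \cdots \to F_1\to F_0\to M\otimes_R F\to 0
\]
with $F_j\in\Flat(R)$. Applying $\Ind\cong T_R(M)\otimes_R-=\bigoplus_{i=0}^N M^i\otimes_R-$ yields a complex
\[
0\to \Ind(F_n)\to\cdots\to\Ind(F_0)\to\Ind(M\otimes_R F)\to 0.
\]
By Lemma~\ref{P-F}(2), each $\Ind(F_j)$ is a flat $T_R(M)$-module. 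So if this complex is exact, it is a finite flat resolution, and $\fd_{T_R(M)}\Ind(M\otimes_R F)\le n$.

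Exactness can be checked on underlying $R$-modules, since exactness in $\Mod(T_R(M))$ is detected by $U$. Summand by summand, it suffices to show that $M^i\otimes_R-$ keeps the resolution exact for each $0\le i\le N$. This holds as soon as
\[
\Tor^R_{\ge1}(M^i, M\otimes_R F)=0 .
\]
The flat modules $F_j$ contribute no Tor, so dimension shifting along the resolution shows this vanishing is exactly what is needed.

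This Tor vanishing is the main obstacle, and it is where condition (P) enters. The case $i=0$ is trivial. For $i\ge1$, I would use that $F$ is flat, so by Lazard's theorem $F=\varinjlim P_\lambda$ with $P_\lambda$ finitely generated free, and Tor commutes with filtered colimits. This gives
\[
\Tor_k^R(M^i, M\otimes_R F)\cong\varinjlim\Tor_k^R(M^i,M)^{\oplus n_\lambda}.
\]
It therefore suffices to prove $\Tor_{\ge1}^R(M^i,M)=0$ for $i\ge1$.

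Here (P) gives vanishing in the order $\Tor^R(M,M^i)$, whereas we need $\Tor^R(M^i,M)$. I would handle this by induction on $i$, using associativity of derived tensor products:
\[
M^{i}\otimes^{\mathbf L}_R M\simeq M\otimes^{\mathbf L}_R(M^{i-1}\otimes^{\mathbf L}_R M).
\]
By induction, $M^{i-1}\otimes^{\mathbf L}_R M\simeq M^{i}$. Condition (P) then gives $M\otimes^{\mathbf L}_R M^{i}\simeq M^{i+1}$, so $M^{i}\otimes^{\mathbf L}_R M$ is concentrated in degree $0$, which is the required vanishing. The base case $i=1$ is (P) with $i=1$.

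If this derived-category bookkeeping becomes awkward, a fallback is to avoid swapping Tor entirely. In that case I would instead resolve $F$-related terms on the other side and show directly that $M^i\otimes_R(M\otimes_R F)\cong M^{i+1}\otimes_R F$ has the expected homology, again using (P) together with flatness of $F$.
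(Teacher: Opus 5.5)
Your proposal is correct and is essentially the paper's proof. Both bound $\fd_R(M\otimes_R F)$ by Lemma~\ref{Dim}, apply $\Ind$ to a finite flat resolution, get exactness from $\Tor^R_{\ge1}(M^i,M\otimes_R F)=0$ for $0\le i\le N$, and conclude with Lemma~\ref{P-F}(2). The only difference is that the paper cites \cite[Lemma 4.2]{C-L} for this Tor vanishing, whereas you prove it yourself by reducing to $\Tor^R_{\ge1}(M^i,M)=0$ and inducting via derived associativity. That argument is valid, but you should make one step explicit: identifying $M^i\otimes^{\mathbf L}_R M$ with $M\otimes^{\mathbf L}_R(M^{i-1}\otimes^{\mathbf L}_R M)$ also uses (P) at index $i-1$, namely to get $M\otimes^{\mathbf L}_R M^{i-1}\simeq M^i$.
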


\begin{proof}
Since $M$ satisfies
the condition (P), it follows from \cite[Lemma 4.2]{C-L} that $\Tor_{\geqslant 1}^R(M^s,   M \otimes_R F)=0$ for all $s\ge 1$
and $F\in \Flat(R)$, and thus
$\Tor_{\geqslant 1}^R(T_R (M),   M \otimes_R F)=0$.
On the other hand, it follows from Lemma $ \ref{Dim}$ that $\fd_R (M\otimes_R F\bigr) \le n $ for some $n$. Considering the flat resolution
	\[
	0 \to F^{-n} \to \cdots \to F^{-1} \to F^0 \to M \otimes_R F \to 0   
	\]
	and applying $\Ind = T_R(M) \otimes_R -$,  we obtain a complex of $T_R(M)$-modules
	\[
	0 \to \Ind(F^{-n}) \to \cdots \to \Ind(F^{-1}) \to \Ind(F^{0}) \to \Ind(M \otimes_R F) \to 0,
	\] which is also exact since $\Tor_{\geqslant 1}^R(T_R (M),   M \otimes_R F)=0$.
Now the fact $\Ind(F^{-i}) \in \Flat (T_R (M))$ implies that $\fd_{T_R(M)}(\Ind(M\otimes_R F))  < \infty$.
\end{proof}

Now we are ready to give a sufficient condition for a module $M$ to be compatible and admissible.
\begin{proposition}\label{com-adm}
Suppose that $M$ satisfies the condition ${\rm(P)}$ and $M \otimes_R-$
preserves cotorsion modules. If $\fd_R M < \infty$
and  $\operatorname{fd}_{R^{\mathrm{op}}} M < \infty$,  then $M$ is compatible and admissible.
\end{proposition}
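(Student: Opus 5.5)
We have to verify four things: (C1), (C2), and the Ext and Tor vanishings in admissibility. The basic tools are Lemma~\ref{temsor-FC1}, which gives $\FC(T_R(M))=\Ind(\FC(R))$, and the finite flat dimension results Lemmas~\ref{Dim} and \ref{DimI}.

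\textbf{(C1).} Let $T^\bullet$ be a totally acyclic complex of flat-cotorsion $T_R(M)$-modules. By Lemma~\ref{temsor-FC1}, each $T^j\cong\Ind(F^j)$ with $F^j\in\FC(R)$, so $U(T^\bullet)$ is an exact complex of flat $R$-modules. Since $\fd_{R^{op}}M<\infty$, say equal to $n$, take a flat resolution $0\to Q_n\to\cdots\to Q_0\to M\to 0$ of right $R$-modules. Tensoring with $U(T^\bullet)$ gives a bounded exact sequence of complexes in which each $Q_k\otimes_R U(T^\bullet)$ is exact. Here we use that $U(T^\bullet)$ is an exact complex of flat modules, so all of its cycles are flat; hence $\Tor_{\ge1}(M,Z^j)=0$ after dimension shifting through the resolution of $M$. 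A standard splicing argument, or equivalently exactness of $M\otimes$ on the short exact sequences $0\to Z^j\to T^j\to Z^{j+1}\to0$, then shows that $M\otimes_RT^\bullet$ is acyclic. This step is the easy one.

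\textbf{(C2).} Fix $F\in\FC(R)$. By Lemma~\ref{DimI}, $\Ind(M\otimes_RF)$ has finite flat dimension over $T_R(M)$. It is also cotorsion by Lemma~\ref{tensor-cor}, since $M^i\otimes_R M\otimes_R F$ is cotorsion because $M\otimes_R-$ preserves cotorsion modules. The intended route is to take a finite flat resolution $0\to G^{-m}\to\cdots\to G^0\to \Ind(M\otimes_RF)\to0$ and replace it by one consisting of flat-cotorsion modules, using cotorsion envelopes of flat modules (which are flat-cotorsion) and the fact that the cotorsion pair is hereditary. Alternatively, we can use that $\Hom(T^\bullet,-)$ is exact on $\FC$ and argue by dimension shifting that $\Ext^{\ge1}(Z^j,C)=0$ for every cotorsion $C$ of finite flat dimension. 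Indeed, the cycles $Z^j$ of $T^\bullet$ satisfy $\Ext^{\ge1}(Z^j,W)=0$ for $W\in\FC$, and each $Z^j$ is a cycle of an exact complex of flat modules, so it has arbitrarily long flat coresolutions. Writing $C$ via $0\to C\to W^0\to\cdots\to W^m\to0$ with $W^i\in\FC$, which is possible since cotorsion modules of finite flat dimension have such finite $\FC$-coresolutions by the hereditary property, dimension shifting gives $\Ext^1(Z^j,C)=0$, and this is exactly acyclicity of $\Hom(T^\bullet,C)$. \textbf{I expect this to be the main obstacle.} The key sub-step is producing that finite $\FC$-coresolution of a cotorsion module of finite flat dimension, and I would first try building it from the flat cover sequence, whose kernel is flat-cotorsion, by induction on flat dimension.

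\textbf{Admissibility.} Let $W\in\GFC(R)$, $F\in\FC(R)$ and $i\ge0$. The module $M^i\otimes_RF$ is cotorsion, and by iterating Lemma~\ref{Dim} it has finite flat dimension. The same argument as in (C2), applied to a totally acyclic complex with cycle $W$, then gives $\Ext^1_R(W,M^i\otimes_RF)=0$. For the Tor condition, $W$ embeds as a cycle in an exact complex of flat modules whose cycles all lie in $\GFC$, so it has a flat coresolution $0\to W\to F^0\to F^1\to\cdots$ of arbitrary length. Tensoring with $M^i$ gives $M^i\otimes W$ as a cycle of the complex $M^i\otimes(\text{flat})$. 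This complex is exact by the (C1) argument applied with $M^i$, whose flat dimension over $R^{op}$ is finite by Lemma~\ref{Dim} and condition (P). Thus $M^i\otimes_R W$ is, up to dimension shift, an $n$-th cosyzygy in an exact complex of modules $M^i\otimes F^j$. By (P) and \cite[Lemma 4.2]{C-L}, $\Tor_{\ge1}(M,M^i\otimes F^j)=0$. Dimension shifting $\Tor_1(M,-)$ beyond $\fd_{R^{op}}M$ along this coresolution then yields $\Tor_1^R(M,M^i\otimes_RW)=0$.
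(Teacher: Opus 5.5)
Your architecture matches the paper's: (C1) by tensoring a finite flat resolution of $M_R$ with $U(T^\bullet)$; (C2) and the Ext part by showing that $\Ind(M\otimes_R F)$ and $M^i\otimes_R F$ are cotorsion of finite flat dimension; the Tor part by dimension shifting. However, two of your justifications are false. In (C1) you claim that $U(T^\bullet)$ is a complex of flat $R$-modules. This fails in general, because $U(\Ind F)=\bigoplus_{i=0}^N M^i\otimes_R F$ and only $\fd_R M<\infty$ is assumed. For instance, if $R\in\FC(R)$, as for a finite-dimensional algebra, then $U(\Ind R)$ has ${}_RM$ as a direct summand. Your further claim that the cycles of an exact complex of flat modules are flat is also false: they are only Gorenstein flat (think of $\cdots\to\mathbb{Z}/4\xrightarrow{2}\mathbb{Z}/4\xrightarrow{2}\cdots$). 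What you actually need, to make $0\to Q_n\otimes_R U(T^\bullet)\to\cdots\to M\otimes_R U(T^\bullet)\to0$ exact, is $\Tor^R_{\ge1}(M,M^i\otimes_R F^j)=0$. This is exactly where condition (P) enters, via \cite[Lemma 4.2]{C-L}, and you never use (P) in (C1). Once you have it, either the bounded sequence of complexes or dimension shifting along $0\to Z^j\to U(T^j)\to Z^{j+1}\to0$ past $\fd_{R^{op}}M$ finishes (C1); shifting ``through the resolution of $M$'' is not the relevant move.

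In (C2), the finite $\FC$-coresolution $0\to C\to W^0\to\cdots\to W^m\to0$ of a cotorsion module of finite flat dimension does not exist in general. Over $\mathbb{Z}$, the group $\mathbb{Q}/\mathbb{Z}$ is injective (hence cotorsion) of flat dimension $1$, but it is torsion, so it embeds in no flat group. A coresolution would also shift $\Ext^1(Z^j,C)$ in the wrong direction. What you need is a finite $\FC$-\emph{resolution} $0\to W_m\to\cdots\to W_0\to C\to0$, and that is what your flat-cover induction actually produces. For cotorsion $C$ with $\fd C=m\ge1$, the flat cover $0\to K\to W_0\to C\to0$ has $K$ cotorsion with $\fd K=m-1$ (so $K$ is flat-cotorsion only when $m=1$), and $W_0$ is cotorsion as an extension of cotorsion modules. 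Then $\Ext^k(Z^j,C)\cong\Ext^{k+m}(Z^j,W_m)=0$ for $k\ge1$, since $\Ext^{\ge1}(Z^j,W)=0$ for $W\in\FC$ by total acyclicity and heredity. This is the fact the paper imports as \cite[Lemma 2.1]{WHJ}, both for (C2) and for the Ext part of admissibility. Also, $\fd_R(M^i\otimes_R F)<\infty$ does not follow by literally iterating Lemma~\ref{Dim}, whose second argument must be flat. You need $\fd_R M^i<\infty$, which again uses (P); the paper cites \cite[Lemma 2.6]{DLST}. Your Tor part of admissibility is correct as written.
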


\begin{proof}
	Let $T^\bullet$ be a totally acyclic complex of flat-cotorsion $T_R(M)$-modules.
	Since $T^i\in \FC (T_R(M))$, it follows from Lemma~\ref{tensor-FC} that
	$T^\bullet$ can be written as: 
	\[
	T^\bullet: \cdots \to \Ind(F^{-1}) \to \Ind(F^0) \to \Ind(F^1) \to \cdots ,
	\]
	where $F^i\in \FC(R)$ for each $i \in \mathbb{Z}$. Applying the functor $U$, we get the following exact sequence
$$Q^\bullet: \cdots \to \textstyle\bigoplus_{i=0}^N  (M^i \otimes_R F^{-1}) \to \bigoplus_{i=0}^N  ( M^i \otimes_R F^0) \to \bigoplus_{i=0}^N   (M^i \otimes_R F^1) \to \cdots.
$$

To prove the condition (C1), we have to prove that $M \otimes_R Q^\bullet$ is exact. Since $\operatorname{fd}_{R^{\mathrm{op}}} M < \infty$,   there is an exact sequence of
right $R$-modules
	\[
	0 \to \widetilde{ F^n} \to \widetilde{F^{n-1}} \to \cdots \to \widetilde{F^0} \to M \to 0  
	\] with $\widetilde{ F^i}\in \Flat(R^{\mathrm{op}})$.
	 Since $M$ satisfies
	the condition (P), it follows from \cite[Lemma 4.2]{C-L} that $\Tor^R_{\geqslant	1}(M,   M^i \otimes_R F) = 0$ for all $F\in \Flat(R)$ and $i\ge 0$, and thus
	$\Tor^R_{\geqslant	1}(M,   \bigoplus_{i=0}^N  (M^i \otimes_R F)) = 0$. Then there exists an exact sequence of complexes
	\[
	0 \to \widetilde{F^n} \otimes_R Q ^\bullet\to \widetilde{F^{n-1}} \otimes_R Q^\bullet \to \cdots \to \widetilde{F^0} \otimes_R Q^\bullet \to M \otimes_R Q^\bullet \to 0. 
	\]
Since $\widetilde{F^k} \otimes_R-$ is exact,  one gets that the complex $\widetilde{F^k} \otimes_R Q^\bullet$ is acyclic for $0 \le k \le n$, and thus $M \otimes_R Q^\bullet$ is exact, as desired.
	
We then prove the condition (C2). For any $F\in \FC(R)$, 
it follows from the fact $\fd_R M < \infty$ and Lemma \ref{DimI} that $\fd_{T_R(M)}(\Ind(M \otimes_R F)) < \infty$. Now we claim that $\Ind(M \otimes_R F)$ is cotorsion, and then
the complex $\Hom_{T_R(M)}(T^\bullet,   \Ind(M \otimes_R F))$ is exact by \cite[Lemma 2.1]{WHJ}.
Since $\Ind(M \otimes_R F) = ( \bigoplus_{i=1}^N (M^i \otimes_R F),   \ c_{M \otimes_R F} )$, it suffices, in view of Lemma~\ref{tensor-cor}, to prove that $\bigoplus_{i=1}^N (M^i \otimes_R F)$ is cotorsion, and this is due to the hypothesis that 
$M \otimes_R-$ preserves cotorsion modules.
	
Next, we prove that $M$ is admissible. Since $M$ satisfies
the condition (P), it follows from \cite[Lemma 4.2]{C-L} that $\Tor^R_{\geqslant	1}(M,   M^i \otimes_R P) = 0$ for all $P\in \Proj(R)$ and $i\ge 1$, and then \cite[Lemma 2.6] {DLST}
yields that
 $\fd _R(M^i) < \infty$ for any $i \ge 0$. Applying Lemma $\ref{Dim}$, we have $\fd _R(M^i \otimes_R F) < \infty$ for all $F\in \FC(R)$.
Since $M \otimes_R-$
preserves cotorsion modules, we get that $M^i \otimes_R F$ is also cotorsion.
Then it follows from \cite[Lemma 2.1]{WHJ} that
$\Ext_R^1(W,    M^i \otimes_R F) = 0$ for each $W \in \GFC(R)$,   $F \in \FC(R)$ and $i \ge 0$.

On the other hand, since $\fd_{R^{\mathrm{op}}} (M ^s) < \infty $ by \cite[Lemma 2.6] {DLST}, a standard dimension shifting argument yields 
that $\Tor^R_{\ge 1} (M ^s,   W) = 0$ for any  $W \in \GFC(R)$.
Thus, by \cite[Lemma 4.2]{C-L},   one gets that $\operatorname{Tor}^R_{\ge 1} (M,   M ^s\otimes_R W) = 0$ for any  $W \in \GFC(R)$ and $s\geqslant 0$. This yields that $M$ is admissible.
\end{proof}

In the following, we will characterize the Gorenstein flat-cotorsion modules over $T_R(M)$.
\begin{proposition}\label{prop:compatible}
{ Suppose that $M \otimes_R-$
	preserves cotorsion modules and $M$ is compatible. If $(X,   u)\in \GFC(T_R(M))$, then 
	$\Coker u\in \GFC(R)$ and $u$ is a monomorphism.
}
\end{proposition}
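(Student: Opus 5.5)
Let $T^\bullet$ be a totally acyclic complex of flat-cotorsion $T_R(M)$-modules with $(X,u)\cong \Ker(T^0\to T^1)$. By Lemma~\ref{tensor-FC}, $T^i\cong \Ind(F^i)$ with $F^i\in\FC(R)$. The plan is to apply the cokernel functor $C$ to $T^\bullet$. The resulting complex $C(T^\bullet)\cong F^\bullet$ should turn out to be a totally acyclic complex of flat-cotorsion $R$-modules whose zeroth cycle is $\Coker u$. Monomorphy of $u$ will come out of the same analysis.

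\textbf{Step 1: the structure map is injective on the syzygies.} For each $T_R(M)$-module $(Z,w)$ we have the four-term sequence
\[ 0\to \Ker w\to M\otimes_R Z\xrightarrow{w} Z\to \Coker w\to 0 .\]
Apply this to the complex $T^\bullet$ degreewise, i.e. to $U(T^\bullet)=Q^\bullet$. For $\Ind(F)$ the structure map $c_F$ is a split monomorphism with cokernel $F$. Hence we get a short exact sequence of complexes
\[ 0\to M\otimes_R Q^\bullet\xrightarrow{c} Q^\bullet\to F^\bullet\to 0 .\]
By (C1), $M\otimes_R Q^\bullet$ is acyclic, and $Q^\bullet$ is acyclic. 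The long exact homology sequence then shows that $F^\bullet$ is acyclic.

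Now write $Z^i=\Ker(Q^i\to Q^{i+1})$, so that $Z^0=X$. Acyclicity of $M\otimes_R Q^\bullet$ gives $\Ker(M\otimes Q^i\to M\otimes Q^{i+1})=\Im(M\otimes Q^{i-1}\to M\otimes Q^i)$. By right exactness of $M\otimes_R-$ applied to $Q^{i-1}\to Z^i\to 0$, this image is the image of $M\otimes Z^i\to M\otimes Q^i$.

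To get $u$ monic, I will use the snake lemma on the diagram comparing the short exact sequences
\[ 0\to Z^{i}\to Q^i\to Z^{i+1}\to 0 \quad\text{and}\quad M\otimes Z^{i}\to M\otimes Q^i\to M\otimes Z^{i+1}\to 0 ,\]
with vertical maps the structure maps. The middle vertical map is monic. For the left map $u_{Z^i}$ to be monic, I need $M\otimes Z^i\to M\otimes Q^i$ to be monic. That is exactly the statement that the cycles of the acyclic complex $M\otimes Q^\bullet$ are $M\otimes Z^i$, i.e. that the complex $0\to M\otimes Z^i\to M\otimes Q^i\to M\otimes Q^{i+1}\to\cdots$ remains exact.

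I expect this to be the main obstacle. I would deduce it from (C1) applied to all shifts, together with right exactness of $M\otimes_R-$ on the left half of the complex. Concretely, $M\otimes Z^{i}$ is the cokernel of $M\otimes Q^{i-2}\to M\otimes Q^{i-1}$, which maps isomorphically onto the image in $M\otimes Q^{i}$ by exactness of $M\otimes Q^\bullet$ at degree $i-1$. Hence $M\otimes Z^i\to M\otimes Q^i$ is injective, and so every $u_{Z^i}$ is monic; in particular $u$ is monic.

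\textbf{Step 2: identify the cycles of $F^\bullet$.} Since all structure maps on cycles are monic, applying the snake lemma (or the $3\times 3$ lemma) to the short exact sequences above shows that
\[ 0\to \Coker u_{Z^i}\to F^i\to \Coker u_{Z^{i+1}}\to 0 \]
is exact. Therefore $\Coker u\cong\Ker(F^0\to F^1)$, and $F^\bullet$ is an acyclic complex in $\FC(R)$.

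\textbf{Step 3: total acyclicity.} Fix $W\in\FC(R)$. By adjunction $(C,S)$, $\Hom_R(F^\bullet,W)\cong\Hom_R(C T^\bullet,W)\cong \Hom_{T_R(M)}(T^\bullet,S(W))$. To show this is acyclic, consider the exact sequence of $T_R(M)$-modules
\[ 0\to \Ind(M\otimes_R W)\xrightarrow{\phi} \Ind(W)\to S(W)=(W,0)\to 0 \]
from 2.2, which is the sequence of \cite[Remark 1.5]{DLST} for $(W,0)$. Here $\Ind(W)\in\FC(T_R(M))$ by Lemma~\ref{temsor-FC1}, using that $M\otimes_R-$ preserves cotorsion modules. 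So $\Hom(T^\bullet,\Ind(W))$ is acyclic by total acyclicity, and $\Hom(T^\bullet,\Ind(M\otimes W))$ is acyclic by (C2).

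Each $T^i$ is flat and $\Ind(M\otimes W)$ is cotorsion (by Lemma~\ref{tensor-cor}, since $M^j\otimes W$ is cotorsion). Hence $\Ext^1(T^i,\Ind(M\otimes W))=0$, so applying $\Hom(T^\bullet,-)$ gives a short exact sequence of complexes. The long exact sequence then yields that $\Hom(T^\bullet,S(W))$ is acyclic. Thus $F^\bullet$ is totally acyclic, and $\Coker u\in\GFC(R)$.
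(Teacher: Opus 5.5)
Your proposal is correct and follows essentially the same route as the paper. You write $T^i\cong\Ind(F^i)$, use (C1) to see that $M\otimes\eta$ (hence $u$) is monic and that $F^\bullet=C(T^\bullet)$ is exact with zeroth cycle $\Coker u$, and then combine the sequence $0\to\Ind(M\otimes_R W)\to\Ind(W)\to(W,0)\to 0$ with (C2), Lemma~\ref{temsor-FC1} and the $(C,S)$-adjunction to get total acyclicity. Your snake-lemma bookkeeping on all the syzygies $Z^i$ only spells out the identification $\Coker u\cong\Ker(F^0\to F^1)$, which the paper compresses into ``by Snake Lemma''.
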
 
\begin{proof}
	Let $(X,   u) \in \GFC(T_R(M))$. It follows from
Lemma~\ref{tensor-FC} that there is a totally acyclic complex 
of flat-cotorsion $T_R(M)$-modules 
	\[
	Q^\bullet: \ \ \cdots \xrightarrow{d^{-2}} \Ind(F^{-1}) \xrightarrow{d^{-1}} \Ind(F^0) \xrightarrow{d^0} \Ind(F^1)\xrightarrow{d^1} \cdots
	\]
	with $F^i\in \FC(R)$  and $(X,   u) \cong \ker(d^0)$.
Consider the monomorphism $\eta: (X,   u) \hookrightarrow \Ind(F^0)=(\bigoplus_{i=0}^N (M^i \otimes_R F^0),   c_{F^0})$. By the acyclicity of $M \otimes_R Q^\bullet$, we infer that
	$M \otimes \eta$ is a monomorphism. Since $c_{F^0}$ is a monomorphism,
	it follows from the commutative diagram
	\[
	\begin{tikzcd}
		M \otimes_R X \arrow[r,   "u"] \arrow[d,   "M \otimes \eta"'] & X \arrow[d,   "\eta"] \\
		\bigoplus_{i=1}^N M^i \otimes_R F^0 \arrow[r,   "c_{F^0}"] & \bigoplus_{i=0}^N M^i \otimes_R F^0
	\end{tikzcd}
	\]
	that $u$ is also a monomorphism.
	On the other hand, we have the following commutative
	diagram 	\[
	\begin{tikzcd}
		0 \arrow[r] & \bigoplus_{i=1}^{N} (M^i \otimes_R F^{-1}) \arrow[r,   "c_{F^{-1}}"] \arrow[d,   "M \otimes d^{-1}"'] & \bigoplus_{i=0}^{N} (M^i \otimes_R F^{-1}) \arrow[r,   "\pi_{F^{-1}}"] \arrow[d,   "d^{-1}"'] & F^{-1} \arrow[r] \arrow[d] & 0 \\
		0 \arrow[r] & \bigoplus_{i=1}^{N} (M^i \otimes_R F^0) \arrow[r,   "c_{F^0}"] \arrow[d,   "M \otimes d^0"'] & \bigoplus_{i=0}^{N} (M^i \otimes_R F^0) \arrow[r,   "\pi_{F^0}"] \arrow[d,   "d^0"'] & F^0 \arrow[r] \arrow[d] & 0 \\
		0 \arrow[r] & \bigoplus_{i=1}^{N} (M^i \otimes_R F^1) \arrow[r,   "c_{F^1}"] & \bigoplus_{i=0}^{N} (M^i \otimes_R F^1) \arrow[r,   "\pi_{F^1}"] & F^1 \arrow[r] & 0,
	\end{tikzcd}
	\] with exact rows. Since $U(Q^\bullet)$ and $M \otimes_R Q^\bullet$ are exact,
	we infer that $F^\bullet$ is also exact, and by Snake Lemma, we see that
	$\Coker u$ is isomorphic to the zeroth cocycle of $F^\bullet$. Therefore, to show
	$\Coker u\in \GFC(R)$, it is sufficient to show that $\Hom_R(F^\bullet,   \widetilde{F})$ is exact for each $\widetilde{F}\in \FC(R)$. By \cite[Remark 1.5]{DLST},
	there is a short exact sequence
\[
0 \to \Ind(M \otimes_R \widetilde{F}) \xrightarrow{\phi_{(\widetilde{F},  0)}} \Ind(\widetilde{F}) \xrightarrow{\varepsilon_{(\widetilde{F},  0)}} (\widetilde{F},  0) \to 0
\]
in $\Mod(T_R(M))$. 
Since $M \otimes_R-$
	preserves cotorsion modules, we deduce that $\Ind(M \otimes_R \widetilde{F})$ is cotorsion by
Lemma~\ref{tensor-cor}, and thus $\Ext _{T_R(M)}^1(Q^i, \Ind(M \otimes_R \widetilde{F}))=0$ as $Q^i\in \Flat(T_R(M))$.
Applying the functor $\Hom_{T_R(M)}(Q^\bullet,   -)$,   one gets a short exact sequence
\[
0 \longrightarrow (Q^\bullet,   \operatorname{Ind}(M \otimes_R \widetilde{F})) \xrightarrow{(Q^\bullet,   \phi{(\widetilde{F},  0)})} (Q^\bullet,   \operatorname{Ind}(\widetilde{F})) \xrightarrow{(Q^\bullet,   \eta{(\widetilde{F},  0)})} (Q^\bullet,   (\widetilde{F},  0)) \longrightarrow 0,
\]
where $(-,-):=\Hom_{T_R(M)}(-,   -)$. On the other hand, it follows from
Lemma~\ref{temsor-FC1} that $\Ind(\widetilde{F})\in\Ind(\FC(R))\subseteq \FC(T_R(M))$. Since $Q^\bullet$ is a totally acyclic complex of flat-cotorsion modules, we infer that
$\Hom_{T_R(M)}(Q^\bullet,   \Ind(\widetilde{F}))$ is exact. 
By assumption,   $\Hom_{T_R(M)}(Q^\bullet,   \Ind(M \otimes_R \widetilde{F}))$ is also exact. Then   $\Hom_{T_R(M)}(Q^\bullet,   S(\widetilde{F})) = \Hom_{T_R(M)}(Q^\bullet,   (\widetilde{F},   0))$ is exact,   which yields that $\Hom_R(C(Q^\bullet),   \widetilde{F})$ is exact. However,   the complex $C(Q^\bullet)$ is nothing but ${F}^\bullet$ since $F^i=\Coker (c_{F^i})$.  Consequently,   $\Hom_R(F^\bullet,   \widetilde{F})$ is acyclic,   as desired.
\end{proof}

Next, we will consider the converse of Proposition~\ref{prop:compatible}.
We need the following lemma.
\begin{lemma}\label{lem-ext-zero}
Suppose that $M \otimes_R-$
preserves cotorsion modules and $M$ is admissible. Let $(X,u)$ be a $T_R(M)$-module. If 
$u$ is a monomorphism  and $ \Coker(u) \in \GFC(R) $, then $\Ext_R^1(F,   M \otimes_R{X} ) = 0$
for any $F\in \Flat(R)$.
\end{lemma}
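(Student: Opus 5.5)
The plan is to filter $M\otimes_R X$ by the submodules $M^i\otimes_R X$, use nilpotency to make the filtration finite, and show that each subquotient is cotorsion.

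\emph{Step 1: the filtration.} Set $W=\Coker u$. Since $u$ is a monomorphism, there is a short exact sequence of $R$-modules
\[
0\to M\otimes_R X\xrightarrow{u} X\to W\to 0 .
\]
I will prove by induction on $i\ge 0$ that the sequences
\[
0\to M^{i+1}\otimes_R X\xrightarrow{M^i\otimes u} M^i\otimes_R X\to M^i\otimes_R W\to 0
\]
are exact. The case $i=0$ is the sequence above. Assume the sequence for $i$ is exact and apply $M\otimes_R-$. The obstruction to left exactness is $\Tor_1^R(M, M^i\otimes_R W)$. This group vanishes by the second half of the admissibility of $M$, because $W\in\GFC(R)$. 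Using $M\otimes_R M^j\cong M^{j+1}$, this gives the sequence for $i+1$.

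\emph{Step 2: the subquotients are cotorsion.} I claim that $M^i\otimes_R W$ is cotorsion for every $i\ge 0$. First, $W$ itself is cotorsion, since every Gorenstein flat-cotorsion module is cotorsion \cite{CEP}. If one prefers to avoid citing this, the fallback is to show that $\Ext^1_R(F,W)=0$ directly from a totally acyclic complex of flat-cotorsion modules through $W$. This would use the hereditary flat cotorsion pair, under which cotorsion modules are closed under cokernels of monomorphisms. Second, $M\otimes_R-$ preserves cotorsion modules by hypothesis. Iterating it, $M^i\otimes_R W$ is cotorsion for all $i\ge 0$.

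\emph{Step 3: descending induction.} I will show that $\Ext^1_R(F, M^i\otimes_R X)=0$ for all $F\in\Flat(R)$ and all $i\ge 1$, by descending induction on $i$. For $i=N+1$ the module $M^{N+1}\otimes_R X$ is zero, because $M^{N+1}=0$. Now suppose the claim holds for $i+1$. Apply $\Hom_R(F,-)$ to the exact sequence of Step 1. The long exact sequence contains the segment
\[
\Ext^1_R(F,M^{i+1}\otimes_R X)\to\Ext^1_R(F,M^i\otimes_R X)\to \Ext^1_R(F,M^i\otimes_R W).
\]
The left term vanishes by induction and the right term vanishes by Step 2, so the middle term vanishes. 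Reaching $i=1$ gives $\Ext^1_R(F, M\otimes_R X)=0$, as required.

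The main obstacle is Step 2, specifically justifying that $W$ is cotorsion. Everything else is formal once admissibility supplies the Tor-vanishing that keeps the filtration exact. Only the Tor half of admissibility is needed here; the Ext half is not used.
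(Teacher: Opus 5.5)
Your proposal is correct and follows the paper's proof essentially step for step. Like the paper, you obtain the exact sequences $0\to M^{i+1}\otimes_R X\to M^i\otimes_R X\to M^i\otimes_R\Coker u\to 0$ from the Tor half of admissibility, get cotorsion-ness of $M^i\otimes_R\Coker u$ from the fact that GFC modules are cotorsion, and then run a descending induction along the nilpotent filtration. One caution: your suggested fallback for Step 2 would be circular, because closure under cokernels of monomorphisms only transfers cotorsion-ness from the previous cycle $\Ker(T^{-1}\to T^0)$ of an unbounded complex. So you should rely on the cited result, which the paper takes from \cite[Theorem 1.3]{BIE20} or \cite[Lemma 2.1]{WHJ}.
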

\begin{proof}
Since $M$ is admissible, we have  
$\Tor_1^R(M,   M^i \otimes_R \Coker  u)=0 $ for any $i \geq 0 $. Applying $M \otimes_R -$ to the short exact sequence
$$\delta: \ 0 \to M \otimes_R X \xrightarrow{u} X \xrightarrow{\rho} \Coker  u \to 0,$$  we get an exact sequence
$$M\otimes \delta:\  0 \to M^2 \otimes_R X  \xrightarrow{M \otimes u} M \otimes_R X \xrightarrow{M \otimes{\rho}} M\otimes_R \Coker  u \to 0$$
as $\Tor_1^R(M,   \Coker  u)=0 $.
Similarly, applying the functor $M \otimes_R -$ to the sequence $M\otimes \delta$, we get an exact sequence
$$M^2\otimes \delta :\  0 \to M^3 \otimes_R X \xrightarrow{M^2 \otimes u} M^2 \otimes_R X \xrightarrow{M^2 \otimes \rho} M^2 \otimes_R \Coker  u \to 0$$
as $\Tor_1^R(M,   M \otimes_R \Coker u) = 0$.
In the same way, for each $3\leq i\leq N$, we get exact sequences
$$M^i\otimes \delta : \ 0 \longrightarrow M^{i+1} \otimes_R X \xrightarrow{M^{i} \otimes u} M^{i} \otimes_R X \xrightarrow{M^{i} \otimes \rho} M^{i}\otimes_R\Coker u \longrightarrow 0.$$
Since $M$ is $N$-nilpotent, we have $M^{N} \otimes_R X\cong M^{N}\otimes_R\Coker u$. On the other hand, it follows from \cite[Theorem 1.3]{BIE20}
or \cite[Lemma 2.1]{WHJ} that every
Gorenstein flat-cotorsion module is cotorsion. Therefore, $\Coker u$ is cotorsion
and so is $M^{i}\otimes_R\Coker u$ since $M \otimes_R-$
preserves cotorsion modules. Then we have $\Ext_R^1(F,  M^{i}\otimes_R\Coker u) =0$
for any $F\in \Flat(R)$ and $i\geq 0$, and thus
$\Ext_R^1(F,  M^{N} \otimes_R X) \cong \Ext_R^1(F,  M^{N}\otimes_R\Coker u) =0$.
Now the exact    
sequence
$$M^{N-1}\otimes \delta :\  0 \rightarrow M^{N} \otimes_R X \xrightarrow{M^{N-1} \otimes u} M^{N-1} \otimes_R X \xrightarrow{M^{N-1} \otimes \rho} M^{N-1}\otimes_R\Coker u \rightarrow 0$$
yields that $\Ext_R^1(F,  M^{N-1} \otimes_R X)=0$.
Iteratively, by the exact sequences $M^{N-2}\otimes \delta , \cdots, M\otimes \delta $, we obtain that $\Ext_R^1(F,   M \otimes_R X) = 0 $.
\end{proof}

\begin{proposition}\label{prop:admissible}
Suppose that  $M \otimes_R-$
preserves cotorsion modules and $M$ is admissible. Let $(X,u)$ be a $T_R(M)$-module. If 
$u$ is a monomorphism  and $ \Coker(u) \in \GFC(R) $, then $(X,u)\in \GFC(T_R(M))$.
\end{proposition}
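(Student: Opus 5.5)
Set $W:=\Coker u\in\GFC(R)$, with totally acyclic complex $F^\bullet$ of flat-cotorsion $R$-modules and $W\cong\Ker(F^0\to F^1)$. The plan is to lift $F^\bullet$ to a totally acyclic complex of the form $\Ind(F^\bullet)$-with-twisted-differentials over $T_R(M)$ whose zeroth cocycle is $(X,u)$. It suffices to build two short exact sequences
\[0\to (X,u)\to \Ind(F^0)\to (X^1,u^1)\to 0,\qquad 0\to (X^{-1},u^{-1})\to \Ind(F^{-1})\to (X,u)\to 0\]
in $\Mod(T_R(M))$ such that the outer terms again satisfy the hypotheses: $u^{\pm1}$ is monic with cokernel $\Ker(F^{\pm1}\to F^{\pm2})$, respectively $\Ker(F^{-1}\to F^{0})$, hence GFC. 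One also needs these sequences to stay exact under $\Hom_{T_R(M)}(-,\Ind(\widetilde F))$ for $\widetilde F\in\FC(R)$. Iterating then gives the complex. By Lemma~\ref{temsor-FC1} its terms $\Ind(F^i)$ lie in $\FC(T_R(M))$.

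\textbf{Right half (embedding).} Take $0\to W\xrightarrow{\iota} F^0\to W^1\to 0$. The map $\iota\rho\colon X\to F^0$ corresponds by adjunction ($\Hom_R(X,F^0)\cong\Hom_{T}((X,u),\text{coinduced})$) to a morphism into a coinduced module. I will instead use the following route. To extend $\iota$ to a $T_R(M)$-map $(X,u)\to\Ind(F^0)$, lift $\rho$ levelwise. The sequences $M^i\otimes\delta$ from the proof of Lemma~\ref{lem-ext-zero} are exact. We need maps $f_i\colon X\to M^i\otimes F^0$ compatible with $u$, i.e.\ $f_{i+1}u=M\otimes f_i$ up to the shift, together with $f_0=\iota\rho$.

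I will construct them from the top degree $N$ downwards. At each step we must extend $M^{i}\otimes f$, which is defined on $M\otimes X$, along the monomorphism $u\colon M\otimes X\hookrightarrow X$ into $M^{i}\otimes F^0$. The obstruction lies in $\Ext^1_R(W,M^i\otimes F^0)$, which vanishes by admissibility. This gives a map $\eta\colon(X,u)\to\Ind(F^0)$ with $C(\eta)=\iota$. A snake lemma argument on the diagram comparing $\delta$ with $0\to M\otimes\Ind F^0\to\Ind F^0\to F^0\to0$, combined with the five lemma applied inductively to the $M^i\otimes\delta$ (which requires the Tor-vanishing in admissibility), shows that $\eta$ is monic. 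It also shows that the cokernel $(X^1,u^1)$ has $u^1$ monic and $\Coker u^1\cong W^1$.

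\textbf{Left half (cover).} Use the canonical epimorphism $\Ind(X)\to (X,u)$, which is not flat-cotorsion. Instead, take $F^{-1}\to W$ and lift $F^{-1}\to W$ along $\rho$ to $g\colon F^{-1}\to X$. This lift exists because $F^{-1}$ is flat and $\Ext^1_R(F^{-1},M\otimes X)=0$ by Lemma~\ref{lem-ext-zero}. The lift $g$ gives $\tilde g\colon\Ind(F^{-1})\to(X,u)$ by adjunction, and $C(\tilde g)$ is the original epimorphism. An epimorphism after applying $C$ together with nilpotency of $M$ gives surjectivity of $\tilde g$ (a Nakayama-type argument: the image $Y$ satisfies $X=Y+u(M\otimes X)$, iterate $N+1$ times). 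The kernel is then analysed with the same snake/five-lemma argument as above.

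\textbf{Hom-exactness.} For $\widetilde F\in\FC(R)$ we have $\Ext^1_{T}((X',u'),\Ind\widetilde F)$ for modules $(X',u')$ of our type. I will compute this using the sequence $0\to\Ind(M\otimes X')\to\Ind(X')\to(X',u')\to0$ and Lemma~\ref{TG}-type isomorphisms. The aim is to reduce it to $\Ext^1_R(\Coker u',\,M^i\otimes\widetilde F)$-terms, which vanish by admissibility. I expect this step to be the main obstacle. Lemma~\ref{TG} only handles flat first arguments, so one must likely filter $(X',u')$ by the $S(M^i\otimes W')$ via the $M^i\otimes\delta$ sequences. Then one uses $\Ext^1_T(S(Y),\Ind\widetilde F)$, obtained via $\phi_{(Y,0)}$, together with Lemma~\ref{Ext}, to reduce to $\Ext^1_R(Y,-)$ with $Y=M^i\otimes W'$. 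These vanish by the admissibility vanishing conditions combined with the cotorsion-preservation hypothesis.
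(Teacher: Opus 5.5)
Your construction of the complex essentially matches the paper's. The left half is the paper's argument: lift $F^{-1}\to\Coker u$ through $\rho$ using Lemma~\ref{lem-ext-zero}, get surjectivity from nilpotency, and note that the kernel is again of the same type; the real input there is $\Tor_1^R(M,X)=0$, obtained from the sequences $M^i\otimes\delta$. The right half is the obstruction argument the paper imports from DLST. One slip there: the recursion must run upward, with $a^0=\iota\rho$ and then $a^i$ extending $M\otimes a^{i-1}$ along $u$, since each $a^i$ is constrained by $a^{i-1}$. It cannot start at degree $N$. The obstruction group $\Ext^1_R(\Coker u,M^i\otimes F^0)=0$ is right.

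The genuine gap is the Hom-exactness step. You correctly reduce it to showing $\Ext^1_{T_R(M)}((X',u'),\Ind\widetilde F)=0$ whenever $u'$ is monic with $W'=\Coker u'\in\GFC(R)$. However, filtering the \emph{first} variable by the $S(M^i\otimes W')$ cannot work, because these pieces are not $\Ext^1$-orthogonal to $\Ind(\FC(R))$. The reduction of $\Ext^1_{T_R(M)}(S(Y),\Ind\widetilde F)$ to $\Ext^1_R(Y,-)$ is false. The sequence $0\to\Ind(M\otimes Y)\to\Ind(Y)\to S(Y)\to 0$ also contributes the cokernel of $\Hom(\Ind Y,\Ind\widetilde F)\to\Hom(\Ind(M\otimes Y),\Ind\widetilde F)$; for $N=1$ this cokernel contains $\Hom_R(M\otimes Y,\widetilde F)$.

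Concretely, take $R=k\times k$ and $M=Re_2\otimes_k e_1R$. Then $M\otimes_RM=0$, $T_R(M)$ is the path algebra of $1\to 2$, and all hypotheses hold trivially. For $(X,u)=P_1=(k\xrightarrow{1}k)$ we have $\Coker u=Re_1$ and $S(\Coker u)=S_1$. Yet $\Ext^1(S_1,\Ind(Re_2))=\Ext^1(S_1,P_2)\neq 0$, while $\Ext^1(P_1,-)=0$. So the vanishing comes from connecting homomorphisms, not piece by piece. Also, for $i\geq 1$ admissibility says nothing about $\Ext^1_R(M^i\otimes W',-)$, since $M^i\otimes W'$ need not be $\GFC$.

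The fix is to filter the \emph{second} variable instead, which is what the paper does.

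\begin{enumerate}
\item Since $u'$ is monic, the snake lemma shows that $C$ is exact on $0\to K\to\Ind Q\to(X',u')\to 0$ with $Q$ projective.
\item Lemma~\ref{Ext}, applied to the adjoint pair $(C,S)$, then gives $\Ext^1_{T_R(M)}((X',u'),S(Y))\cong\Ext^1_R(W',Y)$. This is zero for $Y=M^i\otimes\widetilde F$ by admissibility.
\item Use the sequences $0\to\Ind(M^{i+1}\otimes\widetilde F)\to\Ind(M^i\otimes\widetilde F)\to S(M^i\otimes\widetilde F)\to 0$ and descend on $i$, starting from $\Ind(M^N\otimes\widetilde F)=S(M^N\otimes\widetilde F)$. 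This gives $\Ext^1_{T_R(M)}((X',u'),\Ind\widetilde F)=0$.
\end{enumerate}

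The paper carries out the same filtration of $\Ind\widetilde W$ on the whole complex $\Delta^\bullet$ rather than module by module. Cotorsionness of $\Ind(M^{i+1}\otimes\widetilde W)$ gives short exact sequences of Hom complexes. An explicit computation with $\widetilde a\circ\widetilde b$ then shows $\Hom_{T_R(M)}(\Delta^\bullet,(Y,0))\cong\Hom_R(F^\bullet,Y)$, which is exact because $Y=M^i\otimes\widetilde W$ is cotorsion and $\Ext^1$-orthogonal to $\GFC(R)$.
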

 
\begin{proof} Assume that $(X,u)\in \Mod(T_R(M))$ such that $ \Coker(u) \in \GFC(R) $ and $u$ is a monomorphism.
Then there is a totally acyclic complex of flat-cotorsion $R$-modules
$$F^\bullet: \ \ \cdots \rightarrow F^{-1}\xrightarrow{d^{-1}} F^{0} \xrightarrow{d^{0}}  F^{1}\rightarrow \cdots$$
such that $ \Coker(u) \cong \Ker (d^0)$.
Using a method similar to that in \cite[Proposition 2.11]{DLST}, the admissibility of 
$M$ yields an exact sequence of $T_R(M)$-modules $$0 \to (X,   u) \xrightarrow{\widetilde{a}} \Ind F^0 \xrightarrow{} \Ind F^1 \rightarrow \cdots, 
 $$ where $\widetilde{a}=(a^0,a^1,\cdots, a^N)^{\tr}:X\rightarrow \bigoplus_{i=0}^N (M^i \otimes_R F^{0})$,
 and the $R$-homomorphism $a^0: X\rightarrow  F^{0}$ 
 is the composition of the epimorphism $\rho: X\twoheadrightarrow \Coker(u)$
and the monomorphism $i:\Coker(u)\hookrightarrow F^0$.
Indeed, the argument for statement (1) in \cite[Proposition 2.11]{DLST} applies directly.
The second statement, however, does not hold, as not every module is an image of a flat cotorsion module,
and GFC is not closed under taking kernels of epimorphisms. 
So, it remains to show that there is an exact sequence $$\cdots  \rightarrow \Ind F^{-2} \rightarrow \Ind F^{-1} \xrightarrow{\widetilde{b}}  (X,   u) \rightarrow 0$$
in $\Mod(T_R(M))$.  

Applying $\Hom_R(F^{-1},-)$  to the exact sequence $ 0 \to M \otimes_R X \xrightarrow{u} X \xrightarrow{\rho} \Coker  u \to 0$,
and using Lemma~\ref{lem-ext-zero}, we get
that $\rho_*: \Hom_R(F^{-1},   X) \rightarrow \Hom_R(F^{-1},   \Coker  u) $ is an epimorphism.
Let $\pi: F^{-1} \twoheadrightarrow \Coker u $ be the natural epimorphism in the Gorenstein flat-cotorsion resolution
of $\Coker u$.
Then there exists an $R$-homomorphism $\eta : F^{-1}\rightarrow X$ such that $\pi =\rho _*(\eta) = \rho\circ\eta$.
Set $$\widetilde{b} = (\eta,    u\circ(M \otimes \eta),   u\circ (M \otimes u)\circ(M ^{2}\otimes \eta), \dots , u\circ (M \otimes u)\circ \cdots \circ(M ^N\otimes \eta)),$$
which is an $R$-homomorphism from $\bigoplus_{i=0}^N (M^{i} \otimes _R F^{-1})$ to $X$.
It is routine to check that the following diagram
\[
\begin{tikzcd}
0\arrow[r]&\bigoplus_{i=1}^N (M^{i} \otimes_R F^{-1})\arrow[r,  "c_{F^{-1}}"]\arrow[d,  "M \otimes \widetilde{b} "]&\bigoplus_{i=0}^N (M^{i} \otimes_R F^{-1})\arrow[r,  "\pi _{F^{-1}}"]\arrow[d,  "\widetilde{b}"]&F^{-1}\arrow[r]\arrow[d,  "\pi"]&0\\
0\arrow[r]&M \otimes_R X \arrow[r,  "u"]&X\arrow[r,  "\rho"]&\operatorname{Coker} u\arrow[r]&0
\end{tikzcd}
\]  commutes, and thus $\widetilde{b}$
forms a $T_R(M)$-homomorphism from $\Ind F^{-1}$ to $(X, u)$. Now we claim that
$\widetilde{b}$ is an epimorphism. 
Applying  the functor \( M^j \otimes_R - \) to the above diagram for each $0 \leq j \leq N $, one gets the exact commutative diagram
\[{\fontsize{10}{10}
\begin{tikzcd}
	 & \bigoplus_{i=j+1}^N (M^i \otimes_R F^{-1}) \arrow[r] \arrow[d,   "M^{j+1} \otimes\widetilde{b}"] & \bigoplus_{i=j}^N (M^i\otimes_R F^{-1}) ~~\arrow[r,  "M^j\otimes {\pi _{F^{-1}}}"] \arrow[d,   " M^j \otimes\widetilde{b}"] &M^j \otimes_R F^{-1} \arrow[r] \arrow[d,   "M^j \otimes \pi"] & 0 \\
	0 \arrow[r] & M^{j+1} \otimes_R X \arrow[r] & M^j \otimes_R X \arrow[r,  "{M^j\otimes\rho}"] & M^j \otimes_R \operatorname{Coker} u \arrow[r] & 0,
\end{tikzcd}}
\] where the exactness of the second row follows from the proof of Lemma~\ref{lem-ext-zero}.
Since \( M^j \otimes_R - \) is right exact, we have that $M^j\otimes {\pi}$ is an epimorphism for each $0 \leq j \leq N $.
Then it follows from the fact $M^{N+1} = 0$ that $M^N \otimes\widetilde{b}$ is an epimorphism,
and so are $M^{N-1} \otimes\widetilde{b}, \cdots, M \otimes\widetilde{b}$ and $\widetilde{b}$ by Snake Lemma.
So, there is a short exact sequence of $T_R(M)$-modules
$$  \xi :\ 0\rightarrow (Z,  \delta) \xrightarrow{\alpha} \Ind F^{-1}\xrightarrow{\widetilde{b}} (X,u)\rightarrow 0 $$
with $(Z,  \delta) = \ker \tilde{b}$. Set
$K := \ker \pi \cong \ker d^{-1}$. Then $K\in \GFC (R)$ since it is the (-1)-th cocycle of $F^\bullet$.  
Note that the sequence $\xi$
can be written as the following commutative diagram with exact rows and columns:
\[\begin{tikzcd}
	&&0\arrow[d]&0\arrow[d]&\\
	&M\otimes_RZ\arrow[r,  "M\otimes\alpha"]\arrow[d,  "\delta"] &\bigoplus_{i=1}^N (M^i\otimes_RF^{-1})\arrow[d,  "c_{F^{-1}}"]\arrow[r,  "M\otimes {\tilde{b}}"]  &M\otimes_R X  \arrow[r] \arrow[d,  "u"]& 0\\
	0\arrow[r]&Z\arrow[r,  "\alpha"]\arrow[d]&\bigoplus_{i=0}^N (M^i\otimes_RF^{-1})\arrow[r,  "\widetilde{b}"]\arrow[d,  "\pi_{F^{-1}}"]&X\arrow[r]\arrow[d,  "\rho"]&0\\
	0\arrow[r]&K\arrow[r]&F^{-1}\arrow[d]\arrow[r,  "\pi"]&\operatorname{Coker} u\arrow[d]\arrow[r]&0.\\
	&&0&0
\end{tikzcd}
\] On the other hand, it follows from
\cite[Lemma 2.9 ]{DLST} that $\operatorname{Tor}_1^R(M,   X) = 0$, and then $M\otimes \alpha$ is a monomorphism.
Therefore, we get
$M\otimes_R Z \cong \ker( M \otimes \widetilde{b} )$, and by Snake Lemma, we have an exact sequence $0 \to M \otimes{Z} \xrightarrow{\delta} Z \to K \to 0$.
Then the $T_R(M)$-module $(Z,  \delta)$ has the property that $\delta$
is a monomorphism  and $ \Coker(\delta)\cong K \in \GFC(R) $.
Repeating the above process, we obtain an exact sequence $$\cdots  \rightarrow \Ind F^{-2} \rightarrow \Ind F^{-1} \xrightarrow{\widetilde{b}}  (X,   u) \rightarrow 0$$
in $\Mod(T_R(M))$.  

Above all,  we obtain a long exact sequence of flat-cotorsion $T_R(M)$-modules
\[
\Delta^\bullet: \quad \cdots \longrightarrow \operatorname{Ind} F^{-2} \longrightarrow \operatorname{Ind} F^{-1} \xrightarrow{{\widetilde{a}}\circ\widetilde{b}} \operatorname{Ind} F^0 \longrightarrow \operatorname{Ind} F^1 \longrightarrow \cdots,
\]
whose zeroth cocycle is isomorphic to $(X,   u)$. Therefore, to show $(X,   u)\in \GFC(T_R(M))$, it remains to prove that
$\Hom_{T_R(M)}(\Delta^\bullet,   W)$ is exact for any $ W \in \FC(T_R(M))$,
and by Lemma~\ref{temsor-FC1},  it suffices to show $\Hom_{T_R(M)}(\Delta^\bullet,   \Ind(\tilde{W}))$ is exact for any $\tilde{W} \in \FC(R)$.
We note that we cannot use the isomorphism $\Hom_{T_R(M)}(\Delta^\bullet,   \Ind(\tilde{W}))\cong \Hom_{R}(F^\bullet,  \bigoplus_{i=0}^N (M^i\otimes_R\tilde{W}))$,
because the differentials of $\Delta^\bullet$ are not diagonal matrixes.
So, for any $0\leq i\leq N$, we consider the following 
short exact sequences of $T_R(M)$-modules
$$ 0 \to \Ind( M^{i+1} \otimes_{R} \tilde{W} ) \to \Ind( M^{i} \otimes_{R}\tilde{W}) \to ( M^{i} \otimes_{R}\tilde{W},0) \to 0 $$
arising from \cite[Remark 1.5]{DLST}. 
Since $M\otimes _R-$ preserves cotorsion modules,
we get that $M^{i} \otimes_{R}\tilde{W}$ is cotorsion for every $i$, and so is $\Ind( M^{i} \otimes_{R} \tilde{W} )$
by Lemma~\ref{tensor-cor}. Therefore, we have $\Ext_{T_R(M)}^1( \Ind F^j,   \Ind( M^{i+1} \otimes_{R} \tilde{W} ) ) =0$,
and then we get a short exact sequence of complexes
\[
0 \to(\Delta^\bullet,  \operatorname{Ind}(M^{i+1}\otimes_R\tilde{W}))  \to(\Delta^\bullet,  \operatorname{Ind}(M^i\otimes_R\tilde{W}))\to (\Delta^\bullet,  ( M^{i} \otimes_{R}\tilde{W},0)) \to 0
\] where $(-,-):=\Hom_{T_R(M)}(-,   -)$. 
When $i=N$, we have $(\Delta^\bullet,  \operatorname{Ind}(M^N\otimes_R\tilde{W}))\cong(\Delta^\bullet,  ( M^{N} \otimes_{R}\tilde{W},0))$.
Thus, it suffices to show that $(\Delta^\bullet,  (M^i\otimes_R \tilde{W},0))$ is exact for all $i\geq 0$.
Once this step is complete, we conclude that the complex
$(\Delta^\bullet,  \operatorname{Ind}(M^N\otimes_R\tilde{W}))$ is exact,
and it follows from the aforementioned short exact sequence
that the complexes $(\Delta^\bullet,  \operatorname{Ind}(M^{N-1}\otimes_R\tilde{W})), \cdots, (\Delta^\bullet,  \operatorname{Ind}(M\otimes_R\tilde{W})),
(\Delta^\bullet,  \operatorname{Ind}(\tilde{W}))$  are exact as well.

To show the exactness of $\Hom _{T_R(M)}(\Delta^\bullet,  (M^i\otimes_R \tilde{W},0))$, we first
claim that the complex $\Hom_{T_R(M)}(\Delta^\bullet,   (Y,0))$ is isomorphic to
$\Hom_{R}(F^\bullet,   Y)$ for any $Y\in \Mod (R)$. Consider the following diagram 
$$\begin{tikzcd}
( \Delta^\bullet,  (Y,0) ):\ 	\cdots\arrow[r]&(\Ind F^{0},  (Y,0))\arrow[r,  "(\widetilde{a}\circ\widetilde{b})^\ast"]\arrow[d,  "\Phi^{0}"]&(\Ind F^{-1},  (Y,0))\arrow[r]\arrow[d,  "\Phi^{-1}"]&\cdots\\
	( F^\bullet,  Y ):\  	\cdots\arrow[r]&\arrow[r](F^{0},  Y)\arrow[r,  "(d^{-1})^\ast"]&(F^{-1},  Y)\arrow[r]&\cdots ,\\
	\end{tikzcd} (\ast)$$
where $(-,-):=\Hom_{T_R(M)}(-,   -)$ and the vertical maps are the adjoint isomorphisms, that is,
$\Phi ^0(\alpha)=\alpha^0$ for any $T_R(M)$-homomorphism $\alpha=(\alpha^0,  \alpha^1,  \alpha^2,  \cdots,  \alpha^N)
:\Ind F^{0}\rightarrow (Y,0)$ with $\alpha^i:M^i \otimes_R F^{0}\rightarrow Y$. 
Now we claim that the diagram $(\ast)$ is commutative. Fix a $T_R(M)$-homomorphism $l=(l^0,l^1,\cdots, l^N)
\in \Hom_{T_R(M)}(\Ind F^{0}, (Y,0))$ with $l^i\in \Hom_R(M^i \otimes_R F^{0}, Y)$.
Then we have the following commutative diagram
$$\begin{tikzcd}
\bigoplus_{i=1}^N (M^i \otimes_R F^{-1})\arrow[r,  "c_{F^{0}}"]\arrow[d,  "M\otimes l"]&\bigoplus_{i=0}^N (M^i \otimes_R F^{0})\arrow[d,  "l"]\\
M\otimes_R Y\arrow[r,  "0"]&Y,\\
\end{tikzcd}$$
and thus
$$0=l\circ c_{F^{0}}=
	(l^0,l^1,l^2,\cdots,l^N)
\begin{pmatrix}
		0 & 0 & 0 & \cdots & 0\\
	1 & 0 & 0 & \cdots & 0\\
	0 & 1 & 0 & \cdots & 0\\
	\vdots & \vdots & \vdots & \ddots & \vdots\\
	0 & 0 & 0 & \cdots & 1
	\end{pmatrix}
=(l^1,l^2,\cdots,l^N).$$
Therefore, we get $l^i=0$ for any $1\leq i\leq N$, and then
$l=	(l^0,0,0,\cdots,0)$. On the other hand, we have
$[\Phi^{-1} \circ (\widetilde{a} \circ \widetilde{b})^{*}](l) = \Phi^{-1}[(\widetilde{a} \circ \widetilde{b})^{*}(l)]
= \Phi^{-1}(l \circ \widetilde{a} \circ \widetilde{b})$, where $l \circ \widetilde{a} \circ \widetilde{b}$ is a map from $\Ind F^{-1}$ to $(Y,0) $,
and $\Phi^{-1}(l \circ \widetilde{a} \circ \widetilde{b})$ is the restricted map from $F^{-1}$ to $Y$.
Then the construction of $\widetilde{b}$ yields that $$\Phi^{-1}(l \circ \widetilde{a} \circ \widetilde{b})
=l \circ \widetilde{a} \circ \eta=(l^0,0,0,\cdots,0)\circ \begin{pmatrix}
		a^0 \\
		a^1 \\
		a^2 \\
		\vdots \\
		a^N
	\end{pmatrix}\circ \eta=l^0\circ a^0\circ\eta,
$$ and then $\Phi^{-1}(l \circ \widetilde{a} \circ \widetilde{b})=l^0\circ(i\circ \rho)\circ\eta
=l^0\circ i\circ (\rho \circ\eta)=l^0\circ i\circ \pi = l^0\circ d^{-1}$.
Since $\left[(d^{-1})^{*} \circ \Phi^{{0}}\right](l) = (d^{-1})^{*}(l^{0}) = l^{0} \circ d^{-1}$, we get that
the diagram $(\ast)$ is commutative, and then the complex $\Hom_{T_R(M)}(\Delta^\bullet,   (Y,0))$ is isomorphic to
$\Hom_{R}(F^\bullet,   Y)$ for any $Y\in \Mod (R)$. Therefore,
the complex $\Hom _{T_R(M)}(\Delta^\bullet,  (M^i\otimes_R \tilde{W},0))$
is isomorphic to
$\Hom_{R}(F^\bullet,  M^i\otimes_R \tilde{W})$, which is exact as $M$ is admissible. 
Indeed, since $M^i\otimes_R \tilde{W}$ is cotorsion and the class of flat
modules and cotorsion modules form a complete hereditary
cotorsion pair, we have $\Ext _R^{\geq 1}(F^j, M^i\otimes_R \tilde{W})=0$ for any $j$.
Then for any $s\in \mathbb{Z}$, to compute $\operatorname{Ext}_{R}^1(\Im d^{s+1},   M^i\otimes_R\tilde{W})$, we
consider the exact sequence
$$\cdots \rightarrow F^s\xrightarrow{d^s} F^{s+1} \twoheadrightarrow \Im d^{s+1}\rightarrow 0$$
and apply $\Hom _R(-,M^i\otimes_R\tilde{W})$. Then we have
\[
H^s(\operatorname{Hom}_R( F^\bullet,  M^i\otimes_R \tilde{W}))\cong\operatorname{Ext}_{R}^1(\Im d^{s+1},   M^i\otimes_R\tilde{W}),
\] which is equal to zero since $M$ is admissible, $\Im d^{s+1}\in \GFC(R)$ and $\tilde{W}\in FC (R)$.
\end{proof}

As an immediate consequence of Proposition~\ref{com-adm}, Proposition~\ref{prop:compatible}
and Proposition~\ref{prop:admissible},
we have the next result.

\begin{theorem}\label{cor-ten-GFC}
Suppose that $M$ satisfies the condition ${\rm(P)}$ and $M \otimes_R-$
preserves cotorsion modules. If $\fd_R M < \infty$ and $\fd _{R^{op}}M < \infty$, then
 $(X,  u)\in \GFC(T_R(M))$  if and only if $u$ is monomorphic and $\Coker u \in \GFC(R)$. 
\end{theorem}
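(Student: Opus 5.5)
The theorem follows by assembling the three results proved above; no new construction is needed. Throughout, assume the hypotheses of the theorem: $M$ satisfies condition (P), $M\otimes_R-$ preserves cotorsion modules, $\fd_R M<\infty$ and $\fd_{R^{op}}M<\infty$.

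\emph{Step 1: reduce to compatibility and admissibility.} These are exactly the hypotheses of Proposition~\ref{com-adm}. That proposition therefore shows that $M$ is both compatible and admissible. It is the only place where (P) and the two flat-dimension conditions are used; afterwards we need only compatibility, admissibility, and the preservation of cotorsion modules.

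\emph{Step 2: the ``only if'' direction.} Let $(X,u)\in\GFC(T_R(M))$. Since $M\otimes_R-$ preserves cotorsion modules and $M$ is compatible by Step 1, Proposition~\ref{prop:compatible} applies. It gives that $u$ is a monomorphism and $\Coker u\in\GFC(R)$.

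\emph{Step 3: the ``if'' direction.} Suppose $u$ is monomorphic and $\Coker u\in\GFC(R)$. Since $M\otimes_R-$ preserves cotorsion modules and $M$ is admissible by Step 1, Proposition~\ref{prop:admissible} applies and yields $(X,u)\in\GFC(T_R(M))$.

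\emph{Where the difficulty lies.} The argument is purely formal, so the only care needed is to confirm that each proposition's hypotheses are met. In particular, the preservation of cotorsion modules is required separately by Propositions~\ref{prop:compatible} and \ref{prop:admissible}, and it is assumed directly in the theorem. All the substantive work is already contained in those propositions, chiefly the construction of the left half of the complete resolution in Proposition~\ref{prop:admissible}.
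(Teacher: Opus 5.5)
Your proposal is correct and matches the paper's argument exactly. The paper states the theorem as an immediate consequence of Proposition~\ref{com-adm} (which gives compatibility and admissibility), Proposition~\ref{prop:compatible} (the ``only if'' direction) and Proposition~\ref{prop:admissible} (the ``if'' direction), with the cotorsion-preservation hypothesis passed directly to the latter two, as you describe.
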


The following example is due to Cui, Rong and Zhang \cite[Example 4.7]{CRZ}, which
shows that there exists a bimodule $M$ over some algebra satisfying all the conditions in the above theorem.

\begin{example}\label{ex}
{\rm Let $k$ be a field and $kQ$ the path algebra associated to the quiver
\[
Q: \begin{tikzcd}[column sep=1.5em, ampersand replacement=\&]
1 \ar[r] \& 2 \ar[r] \& 3 \ar[r] \& \cdots \ar[r] \& n. \ar[llll, bend right]
\end{tikzcd}
\]
Suppose that $J$ is the ideal of $kQ$ generated by all the arrows. Then $R = kQ/J^h$ is a self-injective algebra for $2 \leqslant h \leqslant n$. Denote by $e_i$ the idempotent element corresponding to the vertex $i$. Then one has $e_j R e_i = 0$ whenever $1 \leqslant i < j \leqslant n$ and $j - i \geqslant h$. Let $M = R e_i \otimes_k e_j R$. Then $M$ is an $R$-bimodule,
which is finitely-generated projective as a right $R$-module. Therefore,
$M$ satisfies the condition ${\rm(P)}$ and $M \otimes_R-$
preserves cotorsion modules; see Lemma~\ref{lem-pre-cor}.
Moreover, $M$ is nilpotent since $M \otimes_R M \cong R e_i \otimes_k (e_j R \otimes_R R e_i) \otimes_k e_j R = 0$.}
\end{example}

\section{Applications}
\indent\indent In this section, we present some applications to trivial ring extensions and Morita context rings.

\noindent\textbf {4.1 The trivial extension of rings.}
Let $R$ be a ring and $M$ an $R$-bimodule. The {\it trivial extension} ring of $R$ by $M$,
defined as $ R \ltimes M $, is the Cartesian product $R \times M$
with the natural addition and multiplication given by
\[
(r_1, m_1)(r_2, m_2) = (r_1r_2,\; r_1m_2 + m_1r_2),
\]
for all $ r_1, r_2 \in R $ and $m_1, m_2 \in M $. We refer to \cite{FGR, RI} for more details
on trivial extension rings. 

Suppose that the $R$-bimodule $M$ is $1$-nilpotent, that is, $M \otimes_R M = 0.$ Then the tensor ring $ T_R(M) $ is nothing but the
trivial extension ring $ R \ltimes M.$ One can immediately get the following results by Theorem~\ref{cor-ten-GFC}.
\begin{corollary}\label{cor-tri-exten}
Suppose that $M $ is a $1$-nilpotent $R$-bimodule. If 
$\Tor _{\geq 1}^R(M,M)$ $=0$,
$\fd_R M < \infty$ , $\fd _{R^{op}}M < \infty$ and $M \otimes_R-$
preserves cotorsion modules, then $(X,  u)\in \GFC(R \ltimes M)$ if and only if $u$ is monomorphic and $\Coker u \in \GFC(R)$. 
\end{corollary}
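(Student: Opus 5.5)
The plan is to deduce Corollary~\ref{cor-tri-exten} directly from Theorem~\ref{cor-ten-GFC} by specializing to $N=1$. The argument has two parts: identify the ring $R\ltimes M$ and its modules with the tensor ring $T_R(M)$ and its modules, and then check that the hypotheses of the corollary give those of the theorem.

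\textbf{Identifying the rings.} Since $M\otimes_R M=0$, we have $M^i=0$ for all $i\ge 2$. Hence $T_R(M)=R\oplus M$ as an additive group. The multiplication on $T_R(M)$ is induced by $M^n\otimes_R M^m\to M^{n+m}$, and the component $M\otimes_R M$ vanishes. So $(r_1,m_1)(r_2,m_2)=(r_1r_2,\,r_1m_2+m_1r_2)$, which is exactly the multiplication of $R\ltimes M$; thus $T_R(M)\cong R\ltimes M$ as rings. Consequently an $R\ltimes M$-module is a pair $(X,u)$ with $u\colon M\otimes_R X\to X$, as in 2.1, and the notions of monomorphism and cokernel of $u$ are the same on both sides. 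Note that the condition $u\circ(M\otimes u)=0$ is automatic here because $M\otimes_R M=0$.

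\textbf{Checking the hypotheses.} Condition (P) asks that $\Tor^R_{\ge1}(M,M^i)=0$ for all $i\ge1$. For $i=1$ this is the assumption $\Tor^R_{\ge1}(M,M)=0$. For $i\ge 2$ we have $M^i=0$, so these Tor groups vanish trivially. The remaining hypotheses of Theorem~\ref{cor-ten-GFC} are $\fd_R M<\infty$, $\fd_{R^{op}}M<\infty$, and that $M\otimes_R-$ preserves cotorsion modules; all three are assumed verbatim in the corollary.

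\textbf{Conclusion.} Applying Theorem~\ref{cor-ten-GFC} gives: $(X,u)\in\GFC(T_R(M))=\GFC(R\ltimes M)$ if and only if $u$ is a monomorphism and $\Coker u\in\GFC(R)$. The only point needing any care is the ring identification $T_R(M)\cong R\ltimes M$ and the fact that the module descriptions correspond, and both are immediate from $M^2=0$. I do not expect any real obstacle.
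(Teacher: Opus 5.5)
Your proposal is correct and follows the paper's own route: the paper also derives this corollary immediately from Theorem~\ref{cor-ten-GFC}, using that $T_R(M)\cong R\ltimes M$ when $M\otimes_R M=0$. Your explicit check of condition (P) is accurate: it reduces to $\Tor^R_{\ge1}(M,M)=0$ because $M^i=0$ for $i\ge 2$. So is your remark that the module descriptions agree, since the trivial-extension constraint $u\circ(M\otimes u)=0$ is vacuous.
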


\begin{remark}\label{remk-tri-exten}
{\rm In \cite{WHJ}, Wang et al. established some sufficient and necessary
conditions for Gorenstein flat-cotorsion modules over
$ R \ltimes M$. When $M$ is $1$-nilpotent, 
the sequence $M\otimes _RM\otimes _RX\xrightarrow{M\otimes u} M\otimes _RX\xrightarrow{u} X$
is exact if and only if $u$ is a monomorphism. In this case, 
the conditions in Corollary~\ref{cor-tri-exten} improve upon those in \cite[Theorem 2.6]{WHJ}.
Indeed, we weaken the assumption that $M_R$ is flat by only requiring that $\fd (M_R)< \infty$ and $\Tor _{\geq 1}^R(M,M)=0$. 
}
\end{remark}

\noindent\textbf {4.2 Morita context rings.}
Let $A$, $B$ be rings, $N$ an $A$-$B$-bimodule and $M$ a $B$-$A$-bimodule. Consider the Morita context ring
$$ \Lambda = \begin{pmatrix} A & N \\ M & B \end{pmatrix} $$
with zero bimodule homomorphisms, i.e. multiplication is given by
$$ \begin{pmatrix} a & n \\ m & b \end{pmatrix} \begin{pmatrix} a' & n' \\ m' & b' \end{pmatrix} = \begin{pmatrix} aa' & an' + nb' \\ ma' + bm' & bb' \end{pmatrix}. $$
Recall that $\text{Mod}(\Lambda)$ is equivalent to a category with objects tuples $(X, Y, f, g)$ where $X \in \text{Mod}(A)$, $Y \in \text{Mod}(B)$, $f \in \text{Hom}_B(M \otimes_A X, Y)$ and $g \in \text{Hom}_A(N \otimes_B Y, X)$. For a detailed exposition of this we refer to \cite{G82, GP14}. 

It follows from \cite[Proposition 2.5]{GP14} that the ring $\Lambda$ is isomorphic to the trivial extension
ring $(A \times B) \ltimes (M \oplus N)$. We mention that $(M\oplus N) \otimes_{A \times B} (M \oplus N) \cong (M \otimes_A N) \oplus (N \otimes_B M)$.
Then the $A \times B$-bimodule $M \oplus N$ is $1$-nilpotent if and only if $M \otimes_A N = 0 = N \otimes_B M$. Thus, we obtain the next result by Corollary
~\ref{cor-tri-exten}.

\begin{corollary}\label{cor-morita-ring}
Let $\Lambda$ be a Morita context ring with $M \otimes_A N = 0 = N \otimes_B M$, and let $(X, Y, f, g)$ be a $\Lambda$-module.
Assume that both $M \otimes_A-$ and $N \otimes_B-$
preserve cotorsion modules.
If $\mathrm{Tor}_{\geqslant 1}^B(N, M ) = 0 = \mathrm{Tor}_{\geqslant 1}^A(M, N)$,
$\mathrm{fd}_B M < \infty$, $\mathrm{fd}_{A^{\mathrm{op}}} M < \infty$, $\mathrm{fd}_A N < \infty$ and $\mathrm{fd}_{B^{\mathrm{op}}} N < \infty$,
then $(X, Y, f, g) \in \mathsf{GFC}(\Lambda)$ if and only if both $f$ and $g$ are monomorphisms, and $\mathrm{Coker}(f) \in \mathsf{GFC}(B)$ and $\mathrm{Coker}(g) \in \mathsf{GFC}(A)$.
\end{corollary}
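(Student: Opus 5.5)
The statement to prove is Corollary~\ref{cor-morita-ring}. I will deduce it from Corollary~\ref{cor-tri-exten} applied to the trivial extension $(A\times B)\ltimes(M\oplus N)$. Write $R=A\times B$ and $W=M\oplus N$. By \cite[Proposition 2.5]{GP14}, $\Lambda\cong R\ltimes W$. The hypothesis $M\otimes_A N=0=N\otimes_B M$ gives $W\otimes_R W\cong (M\otimes_A N)\oplus(N\otimes_B M)=0$, so $W$ is $1$-nilpotent. It then suffices to check the hypotheses of Corollary~\ref{cor-tri-exten} for $W$ and to translate its conclusion into the language of tuples $(X,Y,f,g)$.

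\textbf{Checking the hypotheses.} I will use the standard identification $\Mod(R)\simeq\Mod(A)\times\Mod(B)$, under which a left $R$-module $Z$ splits as $e_AZ\oplus e_BZ$ with $e_A=(1,0)$ and $e_B=(0,1)$. Here $M$ is a $B$-$A$-bimodule, so it is the $e_BWe_A$ component of $W$, and $N$ is the $e_AWe_B$ component. Every homological invariant over $R$ splits componentwise. Concretely:
\begin{itemize}
\item $\fd_R W=\max\{\fd_B M,\fd_A N\}$ and $\fd_{R^{op}}W=\max\{\fd_{A^{op}}M,\fd_{B^{op}}N\}$, which are finite by hypothesis.
\item $\Tor^R_{n}(W,W)\cong\Tor^A_n(M,N)\oplus\Tor^B_n(N,M)$, since the mixed terms such as $\Tor(M,M)$ involve tensoring over orthogonal idempotents and vanish. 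This vanishes for $n\ge1$ by hypothesis.
\item For $Z=(Z_A,Z_B)$ we have $W\otimes_R Z\cong (N\otimes_B Z_B,\; M\otimes_A Z_A)$.
\end{itemize}
An $R$-module is cotorsion exactly when both components are, because $\Ext^1_R$ and flatness split over $A\times B$. Hence $W\otimes_R-$ preserves cotorsion modules because $M\otimes_A-$ and $N\otimes_B-$ do.

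\textbf{Translating the conclusion.} An $R\ltimes W$-module $(Z,u)$ with $u:W\otimes_R Z\to Z$ corresponds, via the equivalence with tuples, to $(X,Y,f,g)$ with $Z=(X,Y)$ and $u=(g,f)$. Here $g:N\otimes_B Y\to X$ and $f:M\otimes_A X\to Y$. I need to check that this matches the equivalence recalled from \cite{GP14}. Under this correspondence:
\begin{itemize}
\item $u$ is a monomorphism if and only if both $f$ and $g$ are;
\item $\Coker u\cong(\Coker g,\Coker f)$;
\item $\GFC(A\times B)=\GFC(A)\times\GFC(B)$.
\end{itemize}
The last point holds because totally acyclic complexes of flat-cotorsion $R$-modules split into such complexes over $A$ and $B$, and $\FC(R)=\FC(A)\times\FC(B)$. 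Corollary~\ref{cor-tri-exten} then gives exactly the stated equivalence.

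The only step I expect to require care is this bookkeeping of sides. One has to make sure that $W\otimes_R-$ pairs $M$ with the $A$-component and $N$ with the $B$-component, and that the flat-dimension hypotheses land on the correct sides. Everything else is a routine consequence of the product decomposition.
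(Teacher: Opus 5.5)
Your proposal is correct and takes the same route as the paper: identify $\Lambda$ with $(A\times B)\ltimes(M\oplus N)$, note that $M\otimes_A N=0=N\otimes_B M$ makes $M\oplus N$ $1$-nilpotent, and apply Corollary~\ref{cor-tri-exten}. The componentwise checks you add are all correct and only make explicit what the paper leaves to the reader. These are the splittings of $\Tor$ and of flat dimension, the preservation of cotorsion modules, $\GFC(A\times B)=\GFC(A)\times\GFC(B)$, and the identification $u=(g,f)$.
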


In the setting where $M \otimes_A N = 0 = N \otimes_B M$, Corollary~\ref{cor-morita-ring} refines \cite[Proposition 3.1]{WHJ},
as highlighted in Remark~\ref{remk-tri-exten}. Specifically, it weakens the flatness assumptions on
$M_A$ and $N_B$ by only requiring that $\fd (M_A)< \infty$,
$\fd (N_B)< \infty$, $\Tor _{\geq 1}^A(M,N)=0$ and $\Tor _{\geq 1}^B(N,M)=0$. 

\begin{example}
{\rm Suppose that the ring $R$ and the $R$-bimodule $M$ as in Example~\ref{ex}. Then the Morita context ring
\[
\Lambda = \begin{pmatrix}
R & M \\
M & R
\end{pmatrix}
\]
satisfies all conditions in Corollary~\ref{cor-morita-ring}.}
\end{example}

\noindent {\footnotesize {\bf ACKNOWLEDGMENT.}
This work is supported by
the National Natural Science Foundation of China (12561008),
the project of Young and Middle-aged Academic and Technological leader of Yunnan
(202305AC160005) and the Basic Research Program of Yunnan Province (202301AT070070).}

\end{document}